\newtheorem{definition}{Definition}
\newtheorem{theorem}{Theorem}
\newtheorem{proposition}{Proposition}
\title{
A map of contour integral-based eigensolvers
for solving generalized eigenvalue problems
}
\author[1,*]{Akira Imakura}
\author[2]{Lei Du}
\author[1,3]{Tetsuya Sakurai}
\affil[1]{University of Tsukuba, Japan}
\affil[2]{Dalian University of Technology, China.}
\affil[3]{JST/CREST}
\email{imakura@cs.tsukuba.ac.jp}
\begin{document}
\maketitle
\thispagestyle{titlepage}

\begin{abstract}
Recently, contour integral-based methods have been actively studied for solving interior eigenvalue problems that find all eigenvalues located in a certain region and their corresponding eigenvectors. 
In this paper, we reconsider the algorithms of the five typical contour integral-based eigensolvers from the viewpoint of projection methods, and then map the relationships among these methods.
From the analysis, we conclude that all contour integral-based eigensolvers can be regarded as projection methods and can be categorized based on their subspace used, the type of projection and the problem to which they are applied implicitly.
\end{abstract}

\section{Introduction}
In this paper, we consider computing all eigenvalues located in a certain region of a generalized eigenvalue problem and their corresponding eigenvectors:
\begin{equation}
	A {\bm x}_i = \lambda_i B {\bm x}_i, \quad
  {\bm x}_i \in \mathbb{C}^n \setminus \{ {\bm 0} \}, \quad
  \lambda_i \in \Omega \subset \mathbb{C},
	\label{eq:gep}
\end{equation}
where $A, B \in \mathbb{C}^{n \times n}$ and $zB-A$ are assumed as nonsingular for any $z$ on the boundary $\Gamma$ of the region $\Omega$.
Let $m$ be the number of target eigenvalues $\lambda_i \in \Omega$ (counting multiplicity) and $X_\Omega = [{\bm x}_i | \lambda_i \in \Omega]$ be a matrix whose columns are the target eigenvectors.
\par
In 2003, Sakurai and Sugiura proposed a powerful algorithm for solving the interior eigenvalue problem \eqref{eq:gep} \cite{Sakurai:2003}.
Their projection-type method uses certain complex moment matrices constructed by a contour integral.
The basic concept is to introduce the rational function 
\begin{equation}
         r(z) := \widetilde{\bm v}^{\rm H} (z B - A)^{-1} B {\bm v}, \quad
         {\bm v}, \widetilde{\bm v} \in \mathbb{C}^n \setminus \{ {\bm 0} \},
         \label{eq:moment}
\end{equation}
whose poles are the eigenvalues of the generalized eigenvalue problem: $A{\bm x}_i = \lambda_i B {\bm x}_i$, and then compute all poles located in $\Omega$ by Kravanja's algorithm \cite{Kravanja:1999}, which is based on Cauchy's integral formula.
\par
Kravanja's algorithm can be expressed as follows.
Let $\Gamma$ be a positively oriented Jordan curve, i.e., the boundary of $\Omega$.
We define complex moments $\mu_k$ as
\begin{equation*}
        \mu_k := \frac{1}{2 \pi {\rm i} } \oint_\Gamma z^k r(z) {\rm d}z, \quad
        k = 0, 1, \dots, 2M-1.
\end{equation*}
Then, all poles located in $\Omega$ of a meromorphic function $r(z)$ are the eigenvalues of the generalized eigenvalue problem
\begin{equation}
        H_M^< {\bm y}_i = \theta_i H_M {\bm y}_i,
        \label{eq:hankel_gep1}
\end{equation}
where $H_M, H_M^<$ are Hankel matrices:
\begin{equation*}
        H_M := \left(
        \begin{array}{cccc}
                \mu_0 & \mu_1 & \cdots & \mu_{M-1} \\
                \mu_1 & \mu_2 & \cdots & \mu_{M} \\
                \vdots & \vdots & \ddots & \vdots \\
                \mu_{M-1} & \mu_{M} & \cdots & \mu_{2M-2}
        \end{array}
        \right), \quad H_M^< := \left(
        \begin{array}{cccc}
                \mu_1 & \mu_2 & \cdots & \mu_{M} \\
                \mu_2 & \mu_3 & \cdots & \mu_{M+1} \\
                \vdots & \vdots & \ddots & \vdots \\
                \mu_{M} & \mu_{M+1} & \cdots & \mu_{2M-1}
        \end{array}
        \right).
\end{equation*}
Applying Kravanja's algorithm to the rational function \eqref{eq:moment}, the generalized eigenvalue problem \eqref{eq:gep} reduces to the generalized eigenvalue problem with the Hankel matrices \eqref{eq:hankel_gep1}.
This algorithm is called the SS--Hankel method.
\par
The SS--Hankel method has since been developed by several researchers.
The SS--RR method based on the Rayleigh--Ritz procedure increases the accuracy of the eigenpairs \cite{Sakurai:2007}.
Block variants of the SS--Hankel and SS--RR methods (known as the block SS--Hankel and the block SS--RR methods, respectively) improve stability of the algorithms \cite{Ikegami:2008, Ikegami:2010, Ikegami:2010b}.
The block SS--Arnoldi method based on the block Arnoldi method has also been proposed \cite{Imakura:2014}.
Different from these methods, Polizzi proposed the FEAST eigensolver for Hermitian generalized eigenvalue problems, which is based on an accelerated subspace iteration with the Rayleigh--Ritz procedure \cite{Polizzi:2009}.
Their original 2009 version has been further developed \cite{Polizzi:2014, Guttel:2015, Yin:2014}.
\par
Meanwhile, the contour integral-based methods have been extended to nonlinear eigenvalue problems.
Nonlinear eigensolvers are based on the block SS--Hankel \cite{Asakura:2010, Asakura:2009} and the block SS--RR \cite{Yokota:2013} methods and a different type of contour integral-based nonlinear eigensolver was proposed by Beyn \cite{Beyn:2012}, which we call the Beyn method.
More recently, an improvement of the Beyn method was proposed based on using the canonical polyadic decomposition \cite{Barel:2016}.
\par
For Hermitian case, i.e., $A$ is a Hermitian and $B$ is a Hermitian positive definite, there are several related works based on Chebyshev polynomial filtering \cite{Zhoua:2006,Fang:2012} and based on rational interpolation \cite{Austin:2015}.
Specifically, Austin et al. analyzed that the contour integral-based eigensolvers have strong relationship with rational interpolation established in \cite{Austin:2014}, and proposed a projection type method only with real poles \cite{Austin:2015}.
\par
In this paper, we reconsider the algorithms of typical contour integral-based eigensolvers of \eqref{eq:gep}, namely, the block SS--Hankel method \cite{Ikegami:2008, Ikegami:2010}, the block SS--RR method \cite{Ikegami:2010b}, the FEAST eigensolver \cite{Polizzi:2009}, the block SS--Arnoldi method \cite{Imakura:2014} and the Beyn method \cite{Beyn:2012} as projection methods.
We then analyze and map the relationships among these methods.
From the map of the relationships, we also provide error analyses of each method.
Here, we note that our analyses cover the case of Jordan blocks of the size larger than one and infinite eigenvalues (or even both).
%
\par
The remainder of this paper is organized as follows.
Sections~\ref{sec:methods} and \ref{sec:preparation} briefly describe the algorithms of the contour integral-based eigensolvers and analyze the properties of their typical matrices, respectively.
The relationships among these methods are analyzed and mapped in Section~\ref{sec:map}.
Error analyses of the methods are presented in Section~\ref{sec:error}, and numerical experiments are conducted in Section~\ref{sec:experiments}.
The paper concludes with Section~\ref{sec:conclusions}.
\par
Throughout, the following notations are used.
Let $V = [{\bm v}_1, {\bm v}_2, \ldots, {\bm v}_L] \in \mathbb{C}^{n \times L}$ and define the range space of the matrix $V$ by $\mathcal{R}(V) := {\rm span}\{ {\bm v}_1, {\bm v}_2, \ldots, {\bm v}_L \}$.
In addition, for $A \in \mathbb{C}^{n \times n}$, $\mathcal{K}_k^\square (A,V)$ and $\mathcal{B}_k^\square (A,V)$ are the block Krylov subspaces:
\begin{align*}
        & \mathcal{K}_k^\square (A,V) := \mathcal{R}([V, AV, A^2V, \dots, A^{k-1}V]), \\
        & \mathcal{B}_k^\square (A,V) := \left\{ \sum_{i=0}^{k-1} A^i V \alpha_i \middle| \alpha_i \in \mathbb{C}^{L \times L} \right\}.
\end{align*}
We also define a block diagonal matrix with block elements $D_i \in \mathbb{C}^{n_i \times n_i}$ constructed as follows:
\begin{equation*}
        \bigoplus_{i=1}^d D_i = D_1 \oplus D_2 \oplus \dots \oplus D_d = \left(
        \begin{array}{cccc}
                D_1 & & & \\
                & D_2 & & \\
                & & \ddots & \\
                & & & D_d
        \end{array}
        \right) \in \mathbb{C}^{n \times n},
\end{equation*}
where $n = \sum_{i=1}^d n_i$.
\section{Contour integral-based eigensolvers}
\label{sec:methods}
The contour integral-based eigensolvers reduce the target eigenvalue problem \eqref{eq:gep} to a different type of small eigenvalue problem.
In this section, we first describe the reduced eigenvalue problems and then introduce the algorithms of the contour integral-based eigensolvers.
\subsection{Theoretical preparation}
As a generalization of the Jordan canonical form to the matrix pencil, we have the following theorem.
\begin{theorem}[Weierstrass canonical form]
\label{thm:weierstrass}
Let $zB-A$ be regular.
Then, there exist nonsingular matrices $\widetilde{P}^{\rm H}, Q$ such that
\begin{equation*}
        \widetilde{P}^{\rm H} (zB-A) Q =
        \bigoplus_{i=1}^r \left( zI_{n_i}-J_{n_i}(\lambda_i) \right)
        \oplus \bigoplus_{i=r+1}^d \left( zJ_{n_i}(0)-I_{n_i} \right),
\end{equation*}
where $J_{n_i}(\lambda_i)$ is the Jordan block with $\lambda_i$,
\begin{equation*}
        J_{n_i}(\lambda) = \left(
        \begin{array}{cccc}
                \lambda_i & 1 & & \\
                 & \lambda_i & \ddots & \\
                 & & \ddots & 1 \\
                 & & & \lambda_i
        \end{array}
        \right) \in \mathbb{C}^{n_i \times n_i},
\end{equation*}
and $zJ_{n_i}(0)-I_{n_i}$ is the Jordan block with $\lambda=\infty$,
\begin{equation*}
        zJ_{n_i}(0)-I_{n_i} = \left(
        \begin{array}{cccc}
                -1 & z & & \\
                 & -1 & \ddots & \\
                 & & \ddots & z \\
                 & & & -1
        \end{array}
        \right) \in \mathbb{C}^{n_i \times n_i}.
\end{equation*}
\end{theorem}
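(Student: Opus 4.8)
The plan is to derive this from the classical Kronecker/Weierstrass theory of matrix pencils, reducing to the Jordan canonical form of a single matrix in two stages. First I would use the regularity assumption: since $zB - A$ is regular, there exists at least one $z_0 \in \mathbb{C}$ with $\det(z_0 B - A) \neq 0$. After the change of variable $z \mapsto z + z_0$ we may assume without loss of generality that $B_0 := z_0 B - A$ — relabelled simply $-A$ after shifting — is nonsingular, i.e. that $A$ itself is invertible. Concretely, write $zB - A = (z - z_0)B + (z_0 B - A)$ and factor out the nonsingular matrix $(z_0 B - A)$; the pencil becomes $(z_0 B - A)\bigl((z - z_0)(z_0 B - A)^{-1}B + I\bigr)$, so up to multiplication by a fixed nonsingular matrix on the left we reduce to a pencil of the form $wC + I$ with $C = (z_0 B - A)^{-1}B$ and $w = z - z_0$.

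Next I would split $C$ according to its nonzero and zero eigenvalues via the Jordan canonical form: there is a nonsingular $S$ with $S^{-1} C S = C_1 \oplus C_2$, where $C_1$ collects all Jordan blocks of $C$ for nonzero eigenvalues (hence $C_1$ is invertible) and $C_2 = \bigoplus_{i=r+1}^d J_{n_i}(0)$ is nilpotent, consisting of the Jordan blocks for the eigenvalue $0$. Then $wC + I$ is block-diagonalized by $S$ into $(wC_1 + I) \oplus (wC_2 + I)$. For the first block, multiply on the left by the nonsingular matrix $C_1^{-1}$ to obtain $wI + C_1^{-1}$; applying the Jordan form of $C_1^{-1}$ (whose blocks are the $J_{n_i}(\lambda_i)$ after an appropriate choice of basis, with $\lambda_i \neq 0$ the eigenvalues, which become the finite eigenvalues $\lambda_i + z_0$ of the original pencil after undoing the shift), and negating, gives the blocks $zI_{n_i} - J_{n_i}(\lambda_i)$. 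For the second block, $wC_2 + I$ is already in the form $zJ_{n_i}(0) - I_{n_i}$ up to sign and the shift of variable: since $C_2$ is nilpotent, this block contributes the infinite eigenvalues, and one checks the shift $w \mapsto z$ does not spoil the shape because $J_{n_i}(0)$ absorbs the constant. Collecting the two left-multiplying matrices and the basis changes into $\widetilde{P}^{\rm H}$ and $Q$ (both nonsingular, being products of nonsingular matrices) yields the claimed decomposition.

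I expect the main obstacle to be bookkeeping rather than conceptual: tracking how the shift $z = w + z_0$ interacts with the Jordan structure so that the finite blocks come out exactly as $zI_{n_i} - J_{n_i}(\lambda_i)$ with the correct $\lambda_i$, and verifying that the infinite part is genuinely invariant under the shift (this uses that $J_{n_i}(0)$ is nilpotent, so $wJ_{n_i}(0) + I$ and $(w+z_0)J_{n_i}(0) + I$ differ by an invertible matrix of the same block-triangular type, which can be folded into $\widetilde{P}^{\rm H}$). A secondary point worth stating carefully is that the partition into finite and infinite blocks is forced — the finite $\lambda_i$ are precisely the roots of $\det(zB-A)$ and the infinite blocks correspond to the drop in degree of this determinant relative to $n$ — so the decomposition is unique up to permutation and the usual Jordan ambiguities; but since the theorem only asserts existence, I would not belabor uniqueness. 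Alternatively, one can cite this directly as the standard Weierstrass canonical form (e.g. Gantmacher's theory of matrix pencils) and merely sketch the reduction above; given the paper's expository aim, a brief indication of the shift-and-Jordan argument should suffice.
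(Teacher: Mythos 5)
The paper does not prove this theorem at all: it is stated as the classical Weierstrass canonical form, invoked as a known generalization of the Jordan form to regular pencils, so there is no in-paper argument to compare against. Your shift-and-Jordan reduction is exactly the standard textbook proof of that classical result (Gantmacher-style), and it is sound in outline: regularity gives $z_0$ with $z_0B-A$ nonsingular, left multiplication by $(z_0B-A)^{-1}$ turns the pencil into $(z-z_0)C+I$ with $C=(z_0B-A)^{-1}B$, and splitting the Jordan form of $C$ into its invertible part $C_1$ and nilpotent part $C_2$ yields the finite and infinite blocks respectively. Two bookkeeping points in your sketch deserve care rather than just "negating": for the finite part, $(z-z_0)I+J_{n_i}(\nu_i)=zI_{n_i}-\bigl((z_0-\nu_i)I_{n_i}-N\bigr)$ (with $N$ the nilpotent shift), so the finite eigenvalues come out as $\lambda_i=z_0-\nu_i$ (not $\nu_i+z_0$), and the $-1$ superdiagonal is converted to $+1$ by the alternating-sign diagonal similarity ${\rm diag}(1,-1,1,\dots)$, which commutes harmlessly with the $zI$ term; for the infinite part, $(z-z_0)J_{n_i}(0)+I_{n_i}$ should be handled by factoring out the unit upper triangular matrix $I_{n_i}-z_0J_{n_i}(0)$ on the left, then applying one further similarity to bring the resulting nilpotent matrix $(I_{n_i}-z_0J_{n_i}(0))^{-1}J_{n_i}(0)$ back to exact Jordan shape (the $-I_{n_i}$ term is invariant under similarity), together with an overall sign to match $zJ_{n_i}(0)-I_{n_i}$. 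With those details filled in, your argument is complete and entirely consistent with the paper's implicit reliance on the classical theorem; citing Gantmacher, as you suggest, would also have sufficed.
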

\par
The generalized eigenvalue problem $A {\bm x}_i = \lambda_i B {\bm x}_i$ has $r$ finite eigenvalues $\lambda_i, i = 1, 2, \dots, r$ with multiplicity $n_i$ and $d-r$ infinite eigenvalues $\lambda_i, i = r+1, r+2, \dots, d$ with multiplicity $n_i$.
Let $\widetilde{P}_i$ and $Q_i$ be submatrices of $\widetilde{P}$ and $Q$, respectively, corresponding to the $i$-th Jordan block, i.e., $\widetilde{P} = [\widetilde{P}_1, \widetilde{P}_2, \dots, \widetilde{P}_d], Q = [Q_1, Q_2, \dots, Q_d]$.
Then, the columns of $\widetilde{P}_i$ and $Q_i$ are the left/right generalized eigenvectors, whose 1st columns are the corresponding left/right eigenvectors.
\par
Let $L, M \in \mathbb{N}$ be input parameters and $V \in \mathbb{C}^{n \times L}$ be an input matrix.
We also define $S \in \mathbb{C}^{n \times LM}$ and $S_k \in \mathbb{C}^{n \times L}$ as follows:
\begin{equation}
        S := [ S_0, S_1, \dots, S_{M-1}], \quad
        S_k := \frac{1}{2 \pi {\rm i} } \oint_\Gamma z^k (zB-A)^{-1} BV {\rm d}z.
        \label{eq:set_s}
\end{equation}
From Theorem~\ref{thm:weierstrass}, we have the following theorem \cite[Theorem~4]{Ikegami:2008, Ikegami:2010}.
\begin{theorem}
\label{thm:Sk=CkS0}
Let $\widetilde{Q}^{\rm H} = Q^{-1}$ and $\widetilde{Q}_i$ be a submatrix of $\widetilde{Q}$ corresponding to the $i$-th Jordan block, i.e., $\widetilde{Q} = [\widetilde{Q}_1, \widetilde{Q}_2, \dots, \widetilde{Q}_d]$.
Then, we have
\begin{equation*}
        S_k = Q_\Omega J_\Omega^k \widetilde{Q}_\Omega^{\rm H} V
        = (Q_\Omega J_\Omega \widetilde{Q}_\Omega^{\rm H})^k (Q_\Omega \widetilde{Q}_\Omega^{\rm H}V)
        = C_\Omega^k S_0, \quad
        C_\Omega = Q_\Omega J_\Omega \widetilde{Q}_\Omega^{\rm H},
\end{equation*}
where
\begin{equation*}
        Q_\Omega = [Q_i | \lambda_i \in \Omega], \quad
        \widetilde{Q}_\Omega = [\widetilde{Q}_i | \lambda_i \in \Omega], \quad
        J_\Omega = \bigoplus_{\lambda_i \in \Omega} J_{n_i}(\lambda_i).
\end{equation*}
\end{theorem}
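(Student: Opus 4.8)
The plan is to substitute the Weierstrass canonical form of Theorem~\ref{thm:weierstrass} into the resolvent $(zB-A)^{-1}$ and evaluate the contour integral block by block. Writing $zB-A = (\widetilde{P}^{\rm H})^{-1} \left[ \bigoplus_{i=1}^r (zI_{n_i}-J_{n_i}(\lambda_i)) \oplus \bigoplus_{i=r+1}^d (zJ_{n_i}(0)-I_{n_i}) \right] Q^{-1}$, we get
\begin{equation*}
  (zB-A)^{-1} = Q \left[ \bigoplus_{i=1}^r (zI_{n_i}-J_{n_i}(\lambda_i))^{-1} \oplus \bigoplus_{i=r+1}^d (zJ_{n_i}(0)-I_{n_i})^{-1} \right] \widetilde{P}^{\rm H}.
\end{equation*}
Also $B = (\widetilde{P}^{\rm H})^{-1} \widetilde{B} Q^{-1}$ where $\widetilde{B}$ is the block-diagonal ``leading coefficient'' matrix ($I_{n_i}$ on the finite blocks, $J_{n_i}(0)$ on the infinite blocks), so that $(zB-A)^{-1}B = Q \, \mathrm{diag}(\ldots) \, Q^{-1}$ with the $i$-th finite block equal to $z^k(zI_{n_i}-J_{n_i}(\lambda_i))^{-1}$ after multiplying by $z^k$, and the $i$-th infinite block equal to $z^k (zJ_{n_i}(0)-I_{n_i})^{-1} J_{n_i}(0)$.

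The key computation is then the scalar/matrix Cauchy integral on each block. For a finite Jordan block with $\lambda_i \in \Omega$, the standard identity
\begin{equation*}
  \frac{1}{2\pi{\rm i}} \oint_\Gamma z^k (zI_{n_i}-J_{n_i}(\lambda_i))^{-1} {\rm d}z = J_{n_i}(\lambda_i)^k
\end{equation*}
holds, since $(zI-J)^{-1}$ has its only pole at $\lambda_i$ and the residue reproduces $J^k$ by the holomorphic functional calculus (expand $z^k$ around $\lambda_i$, or use $(zI-J)^{-1} = \sum_{j\ge 0}(z-\lambda_i)^{-j-1}(J-\lambda_i I)^j$ which is a finite sum of length $n_i$). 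For a finite block with $\lambda_i \notin \Omega$ the integrand is holomorphic inside $\Gamma$ and the integral vanishes. For the infinite blocks, $(zJ_{n_i}(0)-I_{n_i})^{-1} = -\sum_{j=0}^{n_i-1} z^j J_{n_i}(0)^j$ is a polynomial in $z$, hence $z^k(zJ_{n_i}(0)-I_{n_i})^{-1}J_{n_i}(0)$ is also polynomial in $z$ and its contour integral over the closed curve $\Gamma$ is zero. Assembling these block contributions, only the finite blocks with $\lambda_i \in \Omega$ survive, and we obtain $S_k = Q_\Omega J_\Omega^k \widetilde{P}_\Omega^{\rm H} V$. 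Finally, identifying $\widetilde{P}_\Omega^{\rm H} V$ with $\widetilde{Q}_\Omega^{\rm H} V$ — which follows from $B = (\widetilde{P}^{\rm H})^{-1}\widetilde{B}Q^{-1}$ so that $\widetilde{P}^{\rm H}$ and $\widetilde{Q}^{\rm H} = Q^{-1}$ agree on the finite blocks after accounting for $\widetilde{B}$ being the identity there — gives $S_k = Q_\Omega J_\Omega^k \widetilde{Q}_\Omega^{\rm H}V$. The middle equalities $Q_\Omega J_\Omega^k \widetilde{Q}_\Omega^{\rm H}V = (Q_\Omega J_\Omega \widetilde{Q}_\Omega^{\rm H})^k (Q_\Omega \widetilde{Q}_\Omega^{\rm H}V)$ then follow from $\widetilde{Q}_\Omega^{\rm H} Q_\Omega = I$, which is the submatrix statement of $\widetilde{Q}^{\rm H}Q = I$ restricted to the $\Omega$-blocks.

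I expect the main obstacle to be the careful bookkeeping of which ``left-factor'' matrix ($\widetilde{P}$ versus $\widetilde{Q} = (Q^{-1})^{\rm H}$) appears, and in particular verifying the identity $\widetilde{P}_\Omega^{\rm H}V = \widetilde{Q}_\Omega^{\rm H}V$ needed to put the result into the stated form; this hinges on how $B$ factors through the Weierstrass form and the fact that the leading-coefficient block $\widetilde{B}$ restricted to finite Jordan blocks is the identity. The integral evaluations on the individual Jordan blocks are standard residue calculations and the orthogonality relations among the $Q_i$, $\widetilde{Q}_i$ are immediate from $\widetilde{Q}^{\rm H}Q = I$, so the analytic content is routine once the algebraic setup is pinned down correctly.
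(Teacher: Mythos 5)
Your proposal is correct and uses essentially the machinery the paper itself relies on: the paper does not reprove Theorem~\ref{thm:Sk=CkS0} (it quotes it from \cite{Ikegami:2008, Ikegami:2010}), but its Section~\ref{sec:preparation} derivation of the quadrature analogue $\widehat{S}_k = C^k\widehat{S}_0$ uses exactly your decomposition of $(zB-A)^{-1}$ and $B$ through the Weierstrass form, with your residue evaluations replaced by quadrature identities. Your blockwise Cauchy computations (the integral giving $J_{n_i}(\lambda_i)^k$ for $\lambda_i\in\Omega$, zero for the finite blocks outside $\Gamma$, zero for the infinite blocks since $(zJ_{n_i}(0)-I_{n_i})^{-1}$ is polynomial in $z$) and the final use of $\widetilde{Q}_\Omega^{\rm H}Q_\Omega = I$ to get $C_\Omega^k S_0$ are all sound.

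One bookkeeping correction: the ``identification'' you single out as the main obstacle is not needed in the form you state it, and as literally written it is false --- $\widetilde{P}_\Omega^{\rm H}V \neq \widetilde{Q}_\Omega^{\rm H}V$ in general. What is true, and what your own first display already contains, is $\widetilde{P}^{\rm H}B = \widetilde{B}\,\widetilde{Q}^{\rm H}$, hence $\widetilde{P}_i^{\rm H}BV = \widetilde{Q}_i^{\rm H}V$ for the finite blocks (where $\widetilde{B}$ is the identity); so the correct intermediate expression is $S_k = Q_\Omega J_\Omega^k \widetilde{P}_\Omega^{\rm H}BV$, not $Q_\Omega J_\Omega^k \widetilde{P}_\Omega^{\rm H}V$. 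The cleanest fix is simply to carry the factor $\widetilde{Q}^{\rm H}=Q^{-1}$ from $(zB-A)^{-1}B = Q\,\bigl[\bigoplus_{i\le r}(zI_{n_i}-J_{n_i}(\lambda_i))^{-1}\oplus\bigoplus_{i>r}(zJ_{n_i}(0)-I_{n_i})^{-1}J_{n_i}(0)\bigr]\,\widetilde{Q}^{\rm H}$ through the integral, after which $S_k = Q_\Omega J_\Omega^k\widetilde{Q}_\Omega^{\rm H}V$ appears directly and no separate comparison of $\widetilde{P}$ with $\widetilde{Q}$ is required.
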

Using Theorem~\ref{thm:Sk=CkS0}, we also have the following theorem.
\begin{theorem}
        \label{thm:span}
        Let $m$ be the number of target eigenvalues (counting multiplicity) and $X_\Omega := [{\bm x}_i | \lambda_i \in \Omega]$ be a matrix whose columns are the target eigenvectors.
        Then, we have
        \begin{equation*}
                \mathcal{R}(X_\Omega) \subset \mathcal{R}(Q_\Omega) = \mathcal{R}(S),
        \end{equation*}
        if and only if ${\rm rank}(S) = m$.
\end{theorem}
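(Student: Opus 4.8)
The plan is to work from the factorization $S_k = C_\Omega^k S_0$ with $C_\Omega = Q_\Omega J_\Omega \widetilde{Q}_\Omega^{\rm H}$ established in Theorem~\ref{thm:Sk=CkS0}, and to exploit the block structure of $J_\Omega$ as a direct sum of Jordan blocks $J_{n_i}(\lambda_i)$ with $\lambda_i \in \Omega$. First I would record the obvious inclusions: since every column of $S_k$ has the form $Q_\Omega J_\Omega^k \widetilde{Q}_\Omega^{\rm H} {\bm v}$, it lies in $\mathcal{R}(Q_\Omega)$, hence $\mathcal{R}(S) \subset \mathcal{R}(Q_\Omega)$ unconditionally. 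Likewise, since the first column of each block $Q_i$ is precisely the eigenvector ${\bm x}_i$, we have $\mathcal{R}(X_\Omega) \subset \mathcal{R}(Q_\Omega)$ unconditionally as well. The content of the theorem is therefore the equivalence between the reverse inclusion $\mathcal{R}(Q_\Omega) \subset \mathcal{R}(S)$ and the rank condition ${\rm rank}(S) = m$.

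For the direction ``${\rm rank}(S)=m \Rightarrow \mathcal{R}(S)=\mathcal{R}(Q_\Omega)$'', I would argue by dimension count: $Q_\Omega$ has $m$ columns and is a submatrix of the nonsingular $Q$, so its columns are linearly independent and $\dim \mathcal{R}(Q_\Omega) = m$. Combined with $\mathcal{R}(S) \subset \mathcal{R}(Q_\Omega)$ and ${\rm rank}(S) = \dim\mathcal{R}(S) = m$, the two subspaces coincide. For the converse, I would prove the contrapositive: if ${\rm rank}(S) < m$ then $\mathcal{R}(S)$ is a proper subspace of $\mathcal{R}(Q_\Omega)$, so the reverse inclusion fails. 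This again follows from the same dimension count, since $\mathcal{R}(S) \subset \mathcal{R}(Q_\Omega)$ forces ${\rm rank}(S) \le m$ always, and strict inequality means the inclusion of subspaces is strict.

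The one genuine subtlety — and the step I expect to require the most care — is verifying that $\mathrm{rank}(S)$ can actually reach $m$, i.e.\ that the characterization is not vacuous, and more importantly pinning down exactly when $\mathcal{R}(S) = \mathcal{R}(Q_\Omega)$ in terms of $V$, $L$ and $M$. Writing $S = Q_\Omega [\,\widetilde{Q}_\Omega^{\rm H}V,\ J_\Omega \widetilde{Q}_\Omega^{\rm H}V,\ \dots,\ J_\Omega^{M-1}\widetilde{Q}_\Omega^{\rm H}V\,]$ and using that $Q_\Omega$ has full column rank, we get $\mathrm{rank}(S) = \mathrm{rank}\big([\,W, J_\Omega W, \dots, J_\Omega^{M-1} W\,]\big)$ where $W := \widetilde{Q}_\Omega^{\rm H}V \in \mathbb{C}^{m\times L}$; this is a block Krylov matrix $\mathcal{K}_M^\square(J_\Omega, W)$, and the analysis of when such a matrix attains full row rank $m$ is a standard controllability-type argument that decomposes over the Jordan blocks of $J_\Omega$ (each block of size $n_i$ contributes at most $n_i$ to the rank, and contributes exactly $n_i$ when the relevant rows of $W$ are suitably generic and $LM \ge \max_i n_i$). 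I would state this reduction, invoke the per-block rank analysis, and conclude; the remaining implications are then purely the elementary subspace/dimension bookkeeping sketched above.
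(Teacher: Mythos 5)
Your proposal is correct and follows essentially the same route as the paper: factor $S = Q_\Omega\,[\widetilde{Q}_\Omega^{\rm H}V,\ J_\Omega\widetilde{Q}_\Omega^{\rm H}V,\ \dots,\ J_\Omega^{M-1}\widetilde{Q}_\Omega^{\rm H}V]$, use the full column rank of $Q_\Omega$ to get $\mathcal{R}(S)\subset\mathcal{R}(Q_\Omega)$ with ${\rm rank}(S)={\rm rank}$ of the Krylov-type factor, and conclude the equivalence by a dimension count, with $\mathcal{R}(X_\Omega)\subset\mathcal{R}(Q_\Omega)$ holding unconditionally. The achievability discussion you flag as the ``genuine subtlety'' is not actually needed for the stated if-and-only-if; the paper relegates it to a remark (citing that ${\rm rank}=m$ may fail even when $\widetilde{Q}_\Omega^{\rm H}V$ is full rank).
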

\begin{proof}
        From Theorem~\ref{thm:Sk=CkS0} and the definition of $S$, we have
        \begin{equation*}
                S = [S_0, S_1, \dots, S_{M-1}] = Q_\Omega Z
        \end{equation*}
        where
        \begin{equation*}
                Z := [(Q_\Omega^{\rm H} V), J_\Omega(Q_\Omega^{\rm H} V), \dots, J_\Omega^{M-1}(Q_\Omega^{\rm H} V)].
        \end{equation*}
        Since $Q_\Omega$ is full rank, ${\rm rank}(S) = {\rm rank}(Z)$ and $\mathcal{R}(Q_\Omega) = \mathcal{R}(S)$ is satisfied if and only if ${\rm rank}(S) = {\rm rank}(Z) = m$.
        From the definitions of $X_\Omega$ and $Q_\Omega$, we have $\mathcal{R}(X_\Omega) \subset \mathcal{R}(Q_\Omega)$.
        Therefore, Theorem~\ref{thm:span} is proven.
        \par
        Here, we note that ${\rm rank}(Z)=m$ is not always satisfied for $m \leq LM$ even if $Q_\Omega^{\rm H}V$ is full rank \cite{Gutknecht:2007}.
\end{proof}
\subsection{Introduction to contour integral-based eigensolvers}
The contour integral-based eigensolvers are mathematically designed based on Theorems~\ref{thm:Sk=CkS0} and \ref{thm:span}, then the algorithms are derived from approximating the contour integral \eqref{eq:set_s} using some numerical integration rule:
\begin{equation}
        \widehat{S} := [ \widehat{S}_0, \widehat{S}_1, \dots, \widehat{S}_{M-1}], \quad
        \widehat{S}_k := \sum_{j=1}^N \omega_j z_j^k (z_jB-A)^{-1} BV,
        \label{eq:numerical_integral}
\end{equation}
where $z_j$ is a quadrature point and $\omega_j$ is its corresponding weight.
\subsubsection{The block SS--Hankel method}
The block SS--Hankel method \cite{Ikegami:2008, Ikegami:2010} is a block variant of the SS--Hankel method.
Define the block complex moments $\mu_k^\square \in \mathbb{C}^{L \times L}$ by
\begin{equation*}
        \mu_k^\square := \frac{1}{2 \pi {\rm i} } \oint_\Gamma z^k \widetilde{V}^{\rm H} (zB-A)^{-1} BV {\rm d}z
        = \widetilde{V}^{\rm H} S_k,
\end{equation*}
where $\widetilde{V} \in \mathbb{C}^{n \times L}$, and the block Hankel matrices $H_M, H_M^< \in \mathbb{C}^{LM \times LM}$ are given by
\begin{equation*}
        H_M^\square := \left(
        \begin{array}{cccc}
                \mu_0^\square & \mu_1^\square & \cdots & \mu_{M-1}^\square \\
                \mu_1^\square & \mu_2^\square & \cdots & \mu_{M}^\square \\
                \vdots & \vdots & \ddots & \vdots \\
                \mu_{M-1}^\square & \mu_{M}^\square & \cdots & \mu_{2M-2}^\square
        \end{array}
        \right), \quad H_M^< := \left(
        \begin{array}{cccc}
                \mu_1^\square & \mu_2^\square & \cdots & \mu_{M}^\square \\
                \mu_2^\square & \mu_3^\square & \cdots & \mu_{M+1}^\square \\
                \vdots & \vdots & \ddots & \vdots \\
                \mu_{M}^\square & \mu_{M+1}^\square & \cdots & \mu_{2M-1}^\square
        \end{array}
        \right).
\end{equation*}
We then obtain the following theorem \cite[Theorem~7]{Ikegami:2008, Ikegami:2010}.
\begin{theorem}
        \label{thm:hankel}
        If ${\rm rank}(S)=m$, then the nonsingular part of the matrix pencil $zH_M^\square-H_M^{\square <}$ is equivalent to $zI-J_\Omega$.
\end{theorem}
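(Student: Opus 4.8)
The plan is to factor the block Hankel matrices through the decomposition $S_k = C_\Omega^k S_0$ supplied by Theorem~\ref{thm:Sk=CkS0}, and to recognize the resulting factorization as a block analogue of the classical ``Hankel = observability $\times$ companion $\times$ controllability'' identity. First I would substitute $\mu_k^\square = \widetilde V^{\rm H} S_k = \widetilde V^{\rm H} Q_\Omega J_\Omega^k \widetilde Q_\Omega^{\rm H} V$ into the definitions of $H_M^\square$ and $H_M^{\square<}$. Writing $P := [\widetilde V^{\rm H} Q_\Omega; \widetilde V^{\rm H} Q_\Omega J_\Omega; \dots; \widetilde V^{\rm H} Q_\Omega J_\Omega^{M-1}]$ (a block column of $M$ blocks) and $R := [\widetilde Q_\Omega^{\rm H} V, J_\Omega \widetilde Q_\Omega^{\rm H} V, \dots, J_\Omega^{M-1}\widetilde Q_\Omega^{\rm H} V]$ (a block row of $M$ blocks), one reads off directly that
\begin{equation*}
        H_M^\square = P R, \qquad H_M^{\square<} = P J_\Omega R,
\end{equation*}
since the $(i,j)$ block of $H_M^\square$ is $\widetilde V^{\rm H} Q_\Omega J_\Omega^{\,i+j-2} \widetilde Q_\Omega^{\rm H} V$ and that of $H_M^{\square<}$ is $\widetilde V^{\rm H} Q_\Omega J_\Omega^{\,i+j-1} \widetilde Q_\Omega^{\rm H} V$. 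Thus $z H_M^\square - H_M^{\square<} = P(zI - J_\Omega)R$, which already exhibits $zI-J_\Omega$ as the ``core'' of the pencil.

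Next I would pin down the ranks. By Theorem~\ref{thm:span}, the hypothesis ${\rm rank}(S)=m$ is exactly the condition $\mathcal R(Q_\Omega)=\mathcal R(S)$, and since $S = Q_\Omega Z$ with $Q_\Omega\in\mathbb C^{n\times m}$ of full column rank, this forces $\operatorname{rank}(Z)=m$, i.e. $Z = [\widetilde Q_\Omega^{\rm H}V, J_\Omega\widetilde Q_\Omega^{\rm H}V,\dots,J_\Omega^{M-1}\widetilde Q_\Omega^{\rm H}V] = R$ has full row rank $m$. Hence $R$ has a right inverse. For $P$: note that $H_M^\square = \widetilde V^{\rm H} S$ where $S$ has rank $m$; but more usefully one shows $P$ has full column rank $m$ as well. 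Here the cleanest route is to observe that $P$ has full column rank $m$ exactly under the same genericity on $\widetilde V$ that makes $\operatorname{rank}(H_M^\square)=m$ — in fact $\operatorname{rank}(H_M^\square)=\operatorname{rank}(PR)=m$ already forces both $P$ and $R$ to have rank $m$ (a product of an $LM\times m$ and an $m\times LM$ matrix has rank $m$ iff both factors do). So from $\operatorname{rank}(H_M^\square)=m$ — which is part of what ``nonsingular part'' refers to and follows from $\operatorname{rank}(S)=m$ together with the standing assumptions on $\widetilde V$ as in \cite{Ikegami:2008,Ikegami:2010} — we get that $P$ has a left inverse $P^+$ and $R$ has a right inverse $R^+$.

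Finally I would assemble the equivalence of pencils. Using singular value (or any full-rank) factorizations, choose nonsingular $LM\times LM$ matrices $F, G$ whose leading $m$ columns/rows trivialize $P$ and $R$: concretely, extend $P$ by a basis of a complement of its range and $R$ by a basis of a complement of its cokernel so that $F^{-1} P = \left[\begin{smallmatrix} I_m \\ 0\end{smallmatrix}\right]$ and $R G^{-1} = \left[\begin{smallmatrix} I_m & 0\end{smallmatrix}\right]$. Then
\begin{equation*}
        F^{-1}\bigl(z H_M^\square - H_M^{\square<}\bigr) G^{-1}
        = \begin{pmatrix} zI_m - J_\Omega & 0 \\ 0 & 0 \end{pmatrix},
\end{equation*}
so the nonsingular part of the pencil $zH_M^\square - H_M^{\square<}$ is strictly equivalent to $zI - J_\Omega$, and in particular the finite eigenvalues of the pencil (with their partial multiplicities) are precisely the target eigenvalues $\lambda_i\in\Omega$. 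I expect the main obstacle to be the bookkeeping needed to justify that $\operatorname{rank}(H_M^\square)=m$ — i.e. that $\widetilde V$ is chosen generically enough that no rank is lost when passing from $S$ to $\widetilde V^{\rm H} S$ and that $P$ indeed inherits full column rank — since without that the factorization $P(zI-J_\Omega)R$ could have a nonsingular part strictly smaller than $zI-J_\Omega$; everything else is the routine companion-pencil manipulation above.
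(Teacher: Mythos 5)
The paper itself does not prove this theorem---it is quoted from \cite[Theorem~7]{Ikegami:2008,Ikegami:2010}---so your argument can only be judged on its own merits, and on those merits it is essentially the standard proof underlying the cited result. The factorization is correct: with $P$ the stacked blocks $\widetilde V^{\rm H}Q_\Omega J_\Omega^{\,i-1}$ and $R=[\widetilde Q_\Omega^{\rm H}V, J_\Omega\widetilde Q_\Omega^{\rm H}V,\dots,J_\Omega^{M-1}\widetilde Q_\Omega^{\rm H}V]$ one indeed has $H_M^\square=PR$, $H_M^{\square<}=PJ_\Omega R$, hence $zH_M^\square-H_M^{\square<}=P(zI-J_\Omega)R$, and once $P$ has full column rank $m$ and $R$ full row rank $m$ the strict-equivalence step (padding $P$ and $R$ to nonsingular $F,G$ so the pencil becomes $\left(zI_m-J_\Omega\right)\oplus O$) is routine and gives exactly the stated conclusion; your observation that $\operatorname{rank}(PR)=m$ iff both factors have rank $m$ is also correct. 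Note that $R$ coincides with the matrix $Z$ in the paper's proof of Theorem~\ref{thm:span} (the paper writes $Q_\Omega^{\rm H}V$ there, evidently a typo for $\widetilde Q_\Omega^{\rm H}V$), so $\operatorname{rank}(S)=m$ does give $\operatorname{rank}(R)=m$.

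The one genuine caveat is the point you flagged yourself: $\operatorname{rank}(S)=m$ says nothing about $P$, because $S$ involves only $V$ while $P$ involves only $\widetilde V$. If, say, $\widetilde V^{\rm H}Q_\Omega=O$, then $H_M^\square=H_M^{\square<}=O$ and the pencil has no nonsingular part at all, so the theorem as transcribed in this paper is literally false without an additional nondegeneracy assumption on $\widetilde V$ (equivalently, $\operatorname{rank}(H_M^\square)=m$, or full column rank of $P$). This hypothesis is implicit in the setting of the cited source (and is generically satisfied for random $\widetilde V$), so your proof is complete once that assumption is stated explicitly rather than deferred to ``standing assumptions''; you cannot, as the statement seems to invite, derive it from $\operatorname{rank}(S)=m$ alone. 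With that hypothesis added, your argument is sound and is the same observability--controllability factorization route as the original proof.
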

\par
According to Theorem~\ref{thm:hankel}, the target eigenpairs $(\lambda_i,{\bm x}_i), \lambda_i \in \Omega$ can be obtained through the generalized eigenvalue problem
\begin{equation}
        H_M^{\square <} {\bm y}_i = \theta_i H_M^\square {\bm y}_i.
        \label{eq:hankel_gep}
\end{equation}
In practice, we approximate the block complex moments ${\mu}_k^\square \in \mathbb{C}^{L \times L}$ by the numerical integral \eqref{eq:numerical_integral} such that
\begin{equation*}
        \widehat{\mu}_k^\square := \sum_{j=1}^N \omega_j z_j^k \widetilde{V}^{\rm H} (z_jB-A)^{-1} BV
        = \widetilde{V}^{\rm H} \widehat{S}_k,
\end{equation*}
and set the block Hankel matrices $\widehat{H}_M^\square, \widehat{H}_M^{\square <} \in \mathbb{C}^{LM \times LM}$ as follows:
\begin{equation}
        \widehat{H}_M^\square := \left(
        \begin{array}{cccc}
                \widehat\mu_0^\square & \widehat\mu_1^\square & \cdots & \widehat\mu_{M-1}^\square \\
                \widehat\mu_1^\square & \widehat\mu_2^\square & \cdots & \widehat\mu_{M}^\square \\
                \vdots & \vdots & \ddots & \vdots \\
                \widehat\mu_{M-1}^\square & \widehat\mu_{M}^\square & \cdots & \widehat\mu_{2M-2}^\square
        \end{array}
        \right), \quad \widehat{H}_M^{\square <} := \left(
        \begin{array}{cccc}
                \widehat\mu_1^\square & \widehat\mu_2^\square & \cdots & \widehat\mu_{M}^\square \\
                \widehat\mu_2^\square & \widehat\mu_3^\square & \cdots & \widehat\mu_{M+1}^\square \\
                \vdots & \vdots & \ddots & \vdots \\
                \widehat\mu_{M}^\square & \widehat\mu_{M+1}^\square & \cdots & \widehat\mu_{2M-1}^\square
        \end{array}
        \right).
        \label{eq:hankel}
\end{equation}
To reduce the computational costs and improve the numerical stability, we also introduce a low-rank approximation with a numerical rank $\widehat{m}$ of $\widehat{H}_M^\square$ based on singular value decomposition:
\begin{equation*}
        \widehat{H}_M^\square = [U_{{\rm H}1}, U_{{\rm H}2}] \left[
                \begin{array}{ll}
                        \Sigma_{{\rm H}1} & O \\
                        O & \Sigma_{ {\rm H}2}
                \end{array}
        \right] \left[
                \begin{array}{ll}
                        W_{ {\rm H}1}^{\rm H} \\
                        W_{ {\rm H}2}^{\rm H}
                \end{array}
        \right] \approx U_{ {\rm H}1} \Sigma_{ {\rm H}1} W_{ {\rm H}1}^{\rm H}.
\end{equation*}
In this way, the target eigenvalue problem \eqref{eq:gep} reduces to an $\widehat{m}$ dimensional standard eigenvalue problem, i.e.,
\begin{equation*}
        U_{ {\rm H}1}^{\rm H} \widehat{H}_M^{\square <} W_{ {\rm H}1} \Sigma_{ {\rm H}1}^{-1} {\bm t}_i = \theta_i {\bm t}_i.
\end{equation*}
The approximate eigenpairs are obtained as $(\widetilde{\lambda}_i,\widetilde{\bm x}_i) = (\theta_i, \widehat{S} W_{ {\rm H}1}\Sigma_{\rm H1}^{-1} {\bm t}_i)$.
The algorithm of the block SS--Hankel method is shown in Algorithm~\ref{alg:ss-hankel}.
\begin{algorithm}[t]
\caption{The block SS--Hankel method}
\label{alg:ss-hankel}
\begin{algorithmic}[1]
  \REQUIRE $L, M, N \in \mathbb{N}, V, \widetilde{V} \in \mathbb{C}^{n \times L}, (z_j, \omega_j)$ for $j = 1, 2, \dots, N$
  \ENSURE Approximate eigenpairs $(\widetilde{\lambda}_i, \widetilde{\bm x}_i)$ for $i = 1, 2, \dots, \widehat{m}$
  \STATE Compute $\widehat{S}_k = \sum_{j=1}^{N} \omega_j z_j^k (z_jB-A)^{-1} BV$ and $\widehat{\mu}_k^\square = \widetilde{V}^{\rm H} \widehat{S}_k$
  \STATE Set $\widehat{S} = [\widehat{S}_0, \widehat{S}_1, \dots, \widehat{S}_{M-1}]$ and block Hankel matrices $\widehat{H}_M^\square, \widehat{H}_M^{\square <}$ by \eqref{eq:hankel}
  \STATE Compute SVD of $\widehat{H}_M^\square$: $\widehat{H}_M^\square= [U_{ {\rm H}1}, U_{ {\rm H}2}] [\Sigma_{ {\rm H}1}, O; O, \Sigma_{ {\rm H}2}] [W_{ {\rm H}1}, W_{ {\rm H}2}]^{\rm H}$
  \STATE Compute eigenpairs $(\theta_i, {\bm t}_i)$ of $U_{ {\rm H}1}^{\rm H}\widehat{H}_M^{\square <} W_{ {\rm H}1}^{\rm H} \Sigma_{ {\rm H}1}^{-1} {\bm t}_i = \theta_i {\bm t}_i$, \\ and compute $(\widetilde{\lambda}_i, \widetilde{\bm x}_i) = (\theta_i, \widehat{S} W_{ {\rm H}1}\Sigma_{\rm H1}^{-1} {\bm t}_i)$ for $i = 1, 2, \dots, \widehat{m}$
\end{algorithmic}
\end{algorithm}
\subsubsection{The block SS--RR method}
Theorem~\ref{thm:span} indicates that the target eigenpairs can be computed by the Rayleigh--Ritz procedure over the subspace $\mathcal{R}(S)$, i.e.,
\begin{equation*}
        S^{\rm H} A S {\bm t}_i = \theta_i S^{\rm H} B S {\bm t}_i.
\end{equation*}
\par
The above forms the basis of the block SS--RR method \cite{Ikegami:2010b}.
In practice, the Rayleigh--Ritz procedure uses the approximated subspace $\mathcal{R}(\widehat{S}) \approx \mathcal{R}(S)$ and a low-rank approximation of $\widehat{S}$:
\begin{equation*}
        \widehat{S} = [U_1, U_2] \left[
                \begin{array}{ll}
                        \Sigma_1 & O \\
                        O & \Sigma_2
                \end{array}
        \right] \left[
                \begin{array}{ll}
                        W_1^{\rm H} \\
                        W_2^{\rm H}
                \end{array}
        \right] \approx U_1 \Sigma_1 W_1^{\rm H}.
\end{equation*}
In this case, the reduced problem is given by
\begin{equation*}
        U_1^{\rm H}AU_1 {\bm t}_i = \theta_i U_1^{\rm H}BU_1 {\bm t}_i.
\end{equation*}
The approximate eigenpairs are obtained as $(\widetilde{\lambda}_i,\widetilde{\bm x}_i) = (\theta_i, U_1 {\bm t}_i)$.
The algorithm of the block SS--RR method is shown in Algorithm~\ref{alg:ss-rr}.
\begin{algorithm}[t]
\caption{The block SS--RR method}
\label{alg:ss-rr}
\begin{algorithmic}[1]
  \REQUIRE $L, M, N \in \mathbb{N}, V \in \mathbb{C}^{n \times L}, (z_j, \omega_j)$ for $j = 1, 2, \dots, N$
  \ENSURE Approximate eigenpairs $(\widetilde{\lambda}_i, \widetilde{\bm x}_i)$ for $i = 1, 2, \dots, \widehat{m}$
  \STATE Compute $\widehat{S}_k = \sum_{j=1}^{N} \omega_j z_j^k (z_jB-A)^{-1} BV$, and set $\widehat{S} = [\widehat{S}_0, \widehat{S}_1, \dots, \widehat{S}_{M-1}]$
  \STATE Compute SVD of $\widehat{S}$: $\widehat{S}= [U_1, U_2] [\Sigma_1, O; O, \Sigma_2] [W_1, W_2]^{\rm H}$
  \STATE Compute eigenpairs $({\theta}_i, {\bm t}_i)$ of $U_1^{\rm H}AU_1 {\bm t}_i = \theta_i U_1^{\rm H}BU_1 {\bm t}_i$, \\ and compute $(\widetilde{\lambda}_i, \widetilde{\bm x}_i) = (\theta_i, U_1 {\bm t}_i)$ for $i = 1, 2, \dots, \widehat{m}$
\end{algorithmic}
\end{algorithm}
\subsubsection{The FEAST eigensolver}
The algorithm of the accelerated subspace iteration with the Rayleigh--Ritz procedure for solving Hermitian generalized eigenvalue problems is given in Algorithm~\ref{alg:subiter}.
Here, $\rho(A,B)$ is called an accelerator.
When $\rho(A,B) = B^{-1}A$, Algorithm~\ref{alg:subiter} becomes the standard subspace iteration with the Rayleigh--Ritz procedure.
It computes the $L$ largest-magnitude eigenvalues and their corresponding eigenvectors.
\begin{algorithm}[t]
\caption{The accelerated subspace iteration with the Rayleigh--Ritz procedure}
\label{alg:subiter}
\begin{algorithmic}[1]
  \REQUIRE $L \in \mathbb{N}, V_0 \in \mathbb{C}^{n \times L}$
  \ENSURE Approximate eigenpairs $(\widetilde{\lambda}_i, \widetilde{\bm x}_i)$ for $i = 1, 2, \dots, L$
  \FOR{$k = 1, 2, \dots,$ until convergence} 
  \STATE Approximate subspace projection: $Q_{k} = \rho(A,B) \cdot V_{k-1}$
  \STATE Compute eigenpairs $(\theta_i^{(k)}, {\bm t}_i^{(k)})$ of $Q_k^{\rm H}AQ_k {\bm t}_i = \theta_i Q_k^{\rm H}BQ_k {\bm t}_i$, \\ and compute $(\widetilde{\lambda}_i^{(k)}, \widetilde{\bm x}_i^{(k)}) = (\theta_i^{(k)}, Q_k {\bm t}_i^{(k)})$ for $i = 1, 2, \dots, L$
  \STATE Set $V_{k} = [\widetilde{\bm x}_1^{(k)}, \widetilde{\bm x}_2^{(k)}, \dots, \widetilde{\bm x}_L^{(k)}]$
  \ENDFOR
\end{algorithmic}
\end{algorithm}
\par
The FEAST eigensolver \cite{Polizzi:2009}, proposed for Hermitian generalized eigenvalue problems, is based on an accelerated subspace iteration with the Rayleigh--Ritz procedure.
In the FEAST eigensolver, the accelerator $\rho(A,B)$ is set as
\begin{equation*}
        \rho(A,B) = \sum_{j=1}^N \omega_j (z_j B - A)^{-1} B
        \approx \frac{1}{2 \pi {\rm i} } \oint_\Gamma (zB-A)^{-1} B {\rm d}z,
\end{equation*}
based on Theorem~\ref{thm:span}.
Therefore, the FEAST eigensolver computes the eigenvalues located in $\Omega$ and their corresponding eigenvectors.
For numerical integration, the FEAST eigensolver uses the Gau\ss-Legendre quadrature or the Zolotarev quadrature; see \cite{Polizzi:2009, Guttel:2015}.
\par
In each iteration of the FEAST eigensolver, the target eigenvalue problem \eqref{eq:gep} is reduced to a small eigenvalue problem, i.e.,
\begin{equation*}
        \widehat{S}_0^{\rm H}A\widehat{S}_0 {\bm t}_i = \theta_i \widehat{S}_0^{\rm H}B\widehat{S}_0 {\bm t}_i,
\end{equation*}
based on the Rayleigh--Ritz procedure.
The approximate eigenpairs are obtained as $(\widetilde{\lambda}_i,\widetilde{\bm x}_i) = (\theta_i, \widehat{S}_0 {\bm t}_i)$.
The algorithm of the FEAST eigensolver is shown in Algorithm~\ref{alg:feast}.
\begin{algorithm}[t]
\caption{The FEAST eigensolver}
\label{alg:feast}
\begin{algorithmic}[1]
  \REQUIRE $L, N \in \mathbb{N}, V_0 \in \mathbb{C}^{n \times L}, (z_j, \omega_j)$ for $j = 1, 2, \dots, N$
  \ENSURE Approximate eigenpairs $(\widetilde{\lambda}_i, \widetilde{\bm x}_i)$ for $i = 1, 2, \dots, L$
  \FOR{$k = 1, 2, \dots,$ until convergence} 
  \STATE Compute $\widehat{S}_0^{(k)} = \sum_{j=1}^{N} \omega_j (z_jB-A)^{-1} BV_{k-1}$
  \STATE Compute eigenpairs $(\theta_i^{(k)}, {\bm t}_i^{(k)})$ of $\widehat{S}_0^{(k) {\rm H}}A\widehat{S}_0^{(k)} {\bm t}_i = \theta_i \widehat{S}_0^{(k) {\rm H}}B\widehat{S}_0^{(k)} {\bm t}_i$, \\ and compute $(\widetilde{\lambda}_i^{(k)}, \widetilde{\bm x}_i^{(k)}) = (\theta_i^{(k)}, \widehat{S}_0^{(k)} {\bm t}_i^{(k)})$ for $i = 1, 2, \dots, L$
  \STATE Set $V_k = [\widetilde{\bm x}_1^{(k)}, \widetilde{\bm x}_2^{(k)}, \dots, \widetilde{\bm x}_L^{(k)}]$
  \ENDFOR
\end{algorithmic}
\end{algorithm}
\subsubsection{The block SS--Arnoldi method}
From Theorems~\ref{thm:Sk=CkS0} and \ref{thm:span} and the definition of $C_\Omega:=Q_\Omega J_\Omega \widetilde{Q}_\Omega^{\rm H}$, we have the following three theorems \cite{Imakura:2014}.
\begin{theorem}
        \label{thm:arnoldi}
        The subspace $\mathcal{R}(S)$ can be expressed as the block Krylov subspace associated with the matrix $C_\Omega$:
        \begin{equation*}
                \mathcal{R}(S) = \mathcal{K}_M^\square(C_\Omega,S_0).
        \end{equation*}
\end{theorem}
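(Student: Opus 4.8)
The plan is to unfold both sides of the claimed identity using only the definition of $S$ and Theorem~\ref{thm:Sk=CkS0}; essentially no new work is required. First I would recall that, by definition, $S = [S_0, S_1, \dots, S_{M-1}]$, and that the block Krylov subspace appearing in the statement is, by the notation fixed in the introduction, $\mathcal{K}_M^\square(C_\Omega, S_0) = \mathcal{R}([S_0, C_\Omega S_0, C_\Omega^2 S_0, \dots, C_\Omega^{M-1} S_0])$. Comparing the two expressions, it suffices to verify that the $(k+1)$-st block column $S_k$ of $S$ equals $C_\Omega^k S_0$ for every $k = 0, 1, \dots, M-1$.

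This is precisely the content of Theorem~\ref{thm:Sk=CkS0}, which asserts $S_k = C_\Omega^k S_0$ with $C_\Omega = Q_\Omega J_\Omega \widetilde{Q}_\Omega^{\rm H}$. Substituting block column by block column then yields $S = [S_0, C_\Omega S_0, \dots, C_\Omega^{M-1} S_0]$, so the two matrices whose range spaces we are comparing are literally the same matrix, whence $\mathcal{R}(S) = \mathcal{K}_M^\square(C_\Omega, S_0)$ and the theorem follows.

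There is no real obstacle here; the proof is a one-line consequence of Theorem~\ref{thm:Sk=CkS0}. The only point worth flagging is that Theorem~\ref{thm:Sk=CkS0} already incorporates Jordan blocks of size larger than one as well as infinite eigenvalues — the infinite part of the Weierstrass form in Theorem~\ref{thm:weierstrass} contributes a resolvent term that is polynomial in $z$ and hence integrates to zero over the closed contour $\Gamma$ — so the identity $S_k = C_\Omega^k S_0$, and therefore the present theorem, holds in full generality. In particular, and in contrast with Theorem~\ref{thm:span}, no rank assumption such as ${\rm rank}(S) = m$ is needed for $\mathcal{R}(S) = \mathcal{K}_M^\square(C_\Omega, S_0)$; such a condition would only be required if one additionally wanted $\mathcal{R}(S)$ to coincide with $\mathcal{R}(Q_\Omega)$.
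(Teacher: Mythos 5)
Your proposal is correct and follows exactly the route the paper indicates: the paper derives Theorem~\ref{thm:arnoldi} directly from Theorem~\ref{thm:Sk=CkS0} (citing \cite{Imakura:2014} rather than spelling out the one-line argument), namely $S = [S_0, C_\Omega S_0, \dots, C_\Omega^{M-1}S_0]$, so $\mathcal{R}(S) = \mathcal{K}_M^\square(C_\Omega, S_0)$. Your observation that no rank condition such as ${\rm rank}(S)=m$ is needed for this identity is also consistent with the paper's statement.
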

\begin{theorem}
        \label{thm:arnoldi2}
        Let $m$ be the number of target eigenvalues (counting multiplicity).
        Then, if ${\rm rank}(S) = m$, the target eigenvalue problem \eqref{eq:gep} is equivalent to a standard eigenvalue problem of the form
\begin{equation}
        C_\Omega {\bm x}_i = \lambda_i {\bm x}_i, \quad
        {\bm x}_i \in \mathcal{R}(S) = \mathcal{K}_M^\square(C_\Omega,S_0).
        \label{eq:sep}
\end{equation}
\end{theorem}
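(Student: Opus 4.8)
The plan is to exploit the biorthogonality built into the Weierstrass factorization of Theorem~\ref{thm:weierstrass} together with Theorems~\ref{thm:Sk=CkS0}, \ref{thm:span} and \ref{thm:arnoldi}. First I would note that, since $\widetilde{Q}^{\rm H} = Q^{-1}$, partitioning conformally with the Jordan blocks yields $\widetilde{Q}_i^{\rm H} Q_j = \delta_{ij} I_{n_i}$ and hence $\widetilde{Q}_\Omega^{\rm H} Q_\Omega = I_m$. Substituting this into $C_\Omega = Q_\Omega J_\Omega \widetilde{Q}_\Omega^{\rm H}$ gives $C_\Omega Q_\Omega = Q_\Omega J_\Omega$, so $\mathcal{R}(Q_\Omega)$ is an invariant subspace of $C_\Omega$; and because $Q_\Omega$ has full column rank, it realizes a similarity between the restriction $C_\Omega|_{\mathcal{R}(Q_\Omega)}$ and the block-diagonal Jordan matrix $J_\Omega = \bigoplus_{\lambda_i\in\Omega} J_{n_i}(\lambda_i)$.

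Next, under the hypothesis ${\rm rank}(S) = m$, I would invoke Theorem~\ref{thm:span} to get $\mathcal{R}(S) = \mathcal{R}(Q_\Omega)$ (with $\mathcal{R}(X_\Omega)\subset\mathcal{R}(S)$), and Theorem~\ref{thm:arnoldi} to identify this space with the block Krylov subspace $\mathcal{K}_M^\square(C_\Omega,S_0)$ appearing in \eqref{eq:sep}. It then remains to match eigenpairs in both directions. For a target eigenpair $(\lambda_i,{\bm x}_i)$ with $\lambda_i\in\Omega$, the Weierstrass form exhibits ${\bm x}_i$ as the leading column of a Jordan chain in $Q_\Omega$, i.e. ${\bm x}_i = Q_\Omega {\bm w}_i$ with $J_\Omega {\bm w}_i = \lambda_i {\bm w}_i$; then $C_\Omega {\bm x}_i = Q_\Omega J_\Omega {\bm w}_i = \lambda_i {\bm x}_i$ and ${\bm x}_i\in\mathcal{R}(S)$, so $({\bm x}_i,\lambda_i)$ solves \eqref{eq:sep}. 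Conversely, any nonzero ${\bm x}\in\mathcal{R}(S)=\mathcal{R}(Q_\Omega)$ can be written ${\bm x}=Q_\Omega{\bm w}$, and $C_\Omega{\bm x}=\lambda{\bm x}$ forces $Q_\Omega J_\Omega{\bm w}=\lambda Q_\Omega{\bm w}$, hence $J_\Omega{\bm w}=\lambda{\bm w}$ since $Q_\Omega$ has full column rank; therefore $\lambda$ is one of the $\lambda_i\in\Omega$ and ${\bm x}$ lies in the span of the corresponding eigenvectors.

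The part I expect to require the most care is the multiplicity bookkeeping: the statement asserts that \eqref{eq:sep} is equivalent to the $\Omega$-part of \eqref{eq:gep} not merely spectrally but with the correct partial multiplicities. This will follow from the similarity $C_\Omega|_{\mathcal{R}(S)}\cong J_\Omega$ established above, since the elementary divisors of $J_\Omega$ are by construction exactly those of the nonsingular part of $zB-A$ associated with the eigenvalues in $\Omega$ (the same object featured in Theorem~\ref{thm:hankel}); but I would spell this out explicitly rather than leave it implicit. A minor side remark is that infinite eigenvalues never interfere: $\Omega\subset\mathbb{C}$, so $J_\Omega$ collects only the finite Jordan blocks $J_{n_i}(\lambda_i)$, and the $zJ_{n_i}(0)-I_{n_i}$ parts of the Weierstrass form play no role.
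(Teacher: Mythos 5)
Your argument is correct and is essentially the approach the paper intends: it states this theorem with a citation to the original block SS--Arnoldi paper rather than proving it inline, but its proof of the analogous Theorem~\ref{thm:eigenproblem} uses exactly your reasoning (the biorthogonality $\widetilde{Q}_\Omega^{\rm H}Q_\Omega = I$, the resulting relation $C_\Omega Q_\Omega = Q_\Omega J_\Omega$, and the identification of eigenpairs inside versus outside $\mathcal{R}(Q_\Omega)$), combined with Theorems~\ref{thm:span} and \ref{thm:arnoldi} to equate $\mathcal{R}(Q_\Omega)$, $\mathcal{R}(S)$ and $\mathcal{K}_M^\square(C_\Omega,S_0)$ under ${\rm rank}(S)=m$. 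Your extra care with the two-directional eigenpair matching and the multiplicity bookkeeping via the similarity $C_\Omega|_{\mathcal{R}(S)}\cong J_\Omega$ is consistent with, and slightly more explicit than, what the paper records.
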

\begin{theorem}
        \label{thm:mat-vec}
        Any $E_k \in \mathcal{B}_k^\square(C_\Omega,S_0)$ has the following formula: 
        \begin{equation*}
                E_k = \frac{1}{2 \pi {\rm i}} \oint_\Gamma \sum_{i=0}^{k-1} z^i (zB-A)^{-1} B V \alpha_i {\rm  d}z, \quad
                \alpha_i \in \mathbb{C}^{L \times L}.
        \end{equation*}
        Then, the matrix multiplication of $C_\Omega$ by $E_k$ becomes 
        \begin{equation*}
                C_\Omega E_k = \frac{1}{2 \pi {\rm i}} \oint_\Gamma z \sum_{i=0}^{k-1} z^i (zB-A)^{-1} B V \alpha_i {\rm  d}z.
        \end{equation*}
\end{theorem}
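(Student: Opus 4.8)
The plan is to reduce everything to Theorem~\ref{thm:Sk=CkS0}, which already identifies the power $C_\Omega^i S_0$ with the contour integral $S_i$. First I would fix an arbitrary $E_k \in \mathcal{B}_k^\square(C_\Omega,S_0)$. By the definition of the block Krylov subspace $\mathcal{B}_k^\square$, there exist matrices $\alpha_i \in \mathbb{C}^{L \times L}$, $i = 0,1,\dots,k-1$, such that $E_k = \sum_{i=0}^{k-1} C_\Omega^i S_0 \alpha_i$. Now by Theorem~\ref{thm:Sk=CkS0} we have $C_\Omega^i S_0 = S_i$, and by the definition \eqref{eq:set_s} each $S_i$ equals $\frac{1}{2\pi {\rm i}}\oint_\Gamma z^i (zB-A)^{-1} B V\,{\rm d}z$. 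Substituting this in and pulling the finite sum and the constant right factors $\alpha_i$ inside the integral by linearity gives exactly the first displayed formula.

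For the second part, I would apply $C_\Omega$ to the same representation $E_k = \sum_{i=0}^{k-1} C_\Omega^i S_0 \alpha_i$, obtaining $C_\Omega E_k = \sum_{i=0}^{k-1} C_\Omega^{i+1} S_0 \alpha_i$. Using Theorem~\ref{thm:Sk=CkS0} once more, $C_\Omega^{i+1} S_0 = S_{i+1}$, so $C_\Omega E_k = \sum_{i=0}^{k-1} S_{i+1} \alpha_i$. Rewriting each $S_{i+1}$ as $\frac{1}{2\pi {\rm i}}\oint_\Gamma z^{i+1}(zB-A)^{-1}BV\,{\rm d}z = \frac{1}{2\pi {\rm i}}\oint_\Gamma z \cdot z^{i}(zB-A)^{-1}BV\,{\rm d}z$ and again collecting the finite sum and the $\alpha_i$ under a single integral yields the claimed expression for $C_\Omega E_k$.

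The only point requiring any care is the interchange of the contour integral with the finite summation and with right-multiplication by the constant matrices $\alpha_i$; this is immediate since the sum is finite and each $\alpha_i$ does not depend on $z$. I also note that Theorem~\ref{thm:Sk=CkS0} holds with no rank hypothesis on $S$, so the identities $C_\Omega^i S_0 = S_i$ used above are unconditional. In short, there is no substantive obstacle here: the statement is essentially a repackaging of Theorem~\ref{thm:Sk=CkS0} together with the definition of $\mathcal{B}_k^\square(C_\Omega,S_0)$, and the ``hard part'' amounts only to bookkeeping of the powers of $z$ and the index shift $i \mapsto i+1$ when multiplying by $C_\Omega$.
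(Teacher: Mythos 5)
Your proposal is correct and follows essentially the same route the paper relies on (it states this result citing \cite{Imakura:2014} rather than reproving it): expand $E_k$ via the definition of $\mathcal{B}_k^\square(C_\Omega,S_0)$, use the unconditional identity $S_i = C_\Omega^i S_0$ from Theorem~\ref{thm:Sk=CkS0} together with the contour-integral definition \eqref{eq:set_s} of $S_i$, and shift the index under multiplication by $C_\Omega$. The interchange of the finite sum and constant right factors with the integral is indeed the only point needing mention, and you handle it correctly.
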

From Theorems~\ref{thm:arnoldi} and \ref{thm:arnoldi2}, we observe that the target eigenpairs $(\lambda_i,{\bm x}_i),\lambda_i \in \Omega$ can be computed by the block Arnoldi method with the block Krylov subspace $\mathcal{K}_M^\square(C_\Omega,S_0)$ for solving the standard eigenvalue problem \eqref{eq:sep}.
Here, we note that the matrix $C_\Omega$ is not explicitly constructed.
Instead, the matrix multiplication of $C_\Omega$ can be computed via the contour integral using Theorem~\ref{thm:mat-vec}.
By approximating the contour integral by a numerical integration rule, the algorithm of the block SS--Arnoldi method is derived (Algorithm~\ref{alg:ss-arnoldi}).
\par
A low-rank approximation technique to reduce the computational costs and improve stability is not applied in the current version of the block SS--Arnoldi method \cite{Imakura:2014}.
Improvements of the block SS--Arnoldi method has been developed in \cite{Imakura:2014c}.
\begin{algorithm}[t]
\caption{The block SS--Arnoldi method}
\label{alg:ss-arnoldi}
\begin{algorithmic}[1]
  \REQUIRE $L, M, N \in \mathbb{N}, V \in \mathbb{C}^{n \times L}, (z_j, \omega_j)$ for $j = 1, 2, \ldots, N$
  \ENSURE Approximate eigenpairs $(\widetilde{\lambda}_i, \widetilde{\bm x}_i)$ for $i = 1, 2, \ldots, LM$
  \STATE Solve $Y_j = (z_jB-A)^{-1} BV$ for $j = 1, 2, \ldots, N$
  \STATE $W_0 = \sum_{j=1}^{N} \omega_j Y_j$
  \STATE Compute QR decomposition of $W_0$: $W_0 = W_1 R$
  \STATE Set $\alpha_{1,j} = R^{-1}$ for $j = 1, 2, \ldots, N$
	\FOR{$k = 1, 2, \ldots, M$} 
    \STATE $\widetilde{\alpha}_{k,j} = z_j \alpha_{k,j}$ for $j = 1, 2, \ldots, N$
    \STATE $\widetilde{W}_k = \sum_{j=1}^{N} \omega_j Y_j \widetilde{\alpha}_{k,j}$
		\FOR{$i = 1, 2, \ldots, k$} 
      \STATE $H_{i,k} = W_i^{\rm H} \widetilde{W}_k$
      \STATE $\widetilde{\alpha}_{k,j} = \widetilde{\alpha}_{k,j} - \alpha_{i,j} H_{i,k}$ for $j = 1, 2, \ldots, N$
      \STATE $\widetilde{W}_k = \widetilde{W}_k - W_i H_{i,k}$
		\ENDFOR
    \STATE Compute QR decomposition of $\widetilde{W}_k$: $\widetilde{W}_k = W_{k+1} H_{k+1,k}$
    \STATE $\alpha_{k+1,j} = \widetilde{\alpha}_{k,j} H_{k+1,k}^{-1}$ for $j = 1, 2, \ldots, N$
	\ENDFOR
	\STATE Set $W = [W_1, W_2, \ldots, W_M]$ and $H_M = \{H_{i,j} \}_{1 \leq i, j \leq M}$
  \STATE Compute eigenpairs $(\theta_i, {\bm t}_i)$ of $H_M {\bm t}_i = \theta_i {\bm t}_i$,\\ and compute $(\widetilde{\lambda}_i,\widetilde{\bm x}_i) = (\theta_i, W {\bm t}_i)$ for $i = 1, 2, \ldots, LM$
\end{algorithmic}
\end{algorithm}
\subsubsection{The Beyn method}
The Beyn method is a nonlinear eigensolver based on the contour integral \cite{Beyn:2012}.
In this subsection, we consider the algorithm of the Beyn method for solving the generalized eigenvalue problem \eqref{eq:gep}.
\par
Let the singular value decomposition of $S_0$ be $S_0 = U_{0} \Sigma_{0} W_{0}^{\rm H}$, where $U_0 \in \mathbb{C}^{n \times m}, \Sigma_0 = {\rm diag}(\sigma_1, \sigma_2, \dots, \sigma_m), W_0 \in \mathbb{C}^{L \times m}$ and ${\rm rank}(S_0) = m$.
Then, from Theorem~\ref{thm:Sk=CkS0}, we have
\begin{equation}
        S_0 = Q_\Omega \widetilde{Q}_\Omega^{\rm H}V = U_0 \Sigma_0 W_0^{\rm H}, \quad
        S_1 = Q_\Omega J_\Omega \widetilde{Q}_\Omega^{\rm H}V.
        \label{eq:beyn1}
\end{equation}
Since $\mathcal{R}(Q_\Omega) = \mathcal{R}(U_0)$, we obtain
\begin{equation}
        Q_\Omega = U_0 Z, \quad Z = U_0^{\rm H} Q_\Omega \in \mathbb{C}^{m \times m},
        \label{eq:beyn2}
\end{equation}
where $Z$ is nonsingular.
With \eqref{eq:beyn1} and \eqref{eq:beyn2}, we find $U_0 Z \widetilde{Q}_\Omega^{\rm H} V = U_0 \Sigma_0 W_0^{\rm H}$ and thus $\widetilde{Q}_\Omega^{\rm H} V = Z^{-1} \Sigma_0 W_0^{\rm H}$.
This leads to
\begin{equation*}
        U_0^{\rm H} S_1 = Z J_\Omega \widetilde{Q}_\Omega^{\rm H} V = Z J_\Omega Z^{-1} \Sigma_0 W_0^{\rm H}.
\end{equation*}
Therefore, we have
\begin{equation*}
        Z J_\Omega Z^{-1} = U_0^{\rm H} S_1 W_0 \Sigma_0^{-1}.
\end{equation*}
This means that the target eigenpairs $(\lambda_i,{\bm x}_i), \lambda_i \in \Omega$ are computed by solving
%
%
\begin{equation*}
        U_0^{\rm H} S_1 W_0 \Sigma_0^{-1} {\bm t}_i = \theta_i {\bm t}_i,
\end{equation*}
where $(\lambda_i,{\bm x}_i) = (\theta_i, U_0 {\bm t}_i)$ \cite{Beyn:2012}.
\par
In practice, we compute a low-rank approximation of $\widehat{S}_0$ by the singular value decomposition, i.e., 
\begin{equation}
        \widehat{S}_0 = [U_{0,1}, U_{0,2}] \left[ 
                \begin{array}{ll}
                        \Sigma_{0,1} & O \\
                        O & \Sigma_{0,2}
                \end{array}
        \right] \left[
                \begin{array}{l}
                        W_{0,1}^{\rm H} \\
                        W_{0,2}^{\rm H}
                \end{array}
        \right] \approx U_{0,1} \Sigma_{0,1} W_{0,1}^{\rm H},
        \label{eq:beyn_svd}
\end{equation}
which reduces the target eigenvalue problem \eqref{eq:gep} to the standard eigenvalue problem
\begin{equation}
        U_{0,1}^{\rm H} \widehat{S}_1 W_{0,1} \Sigma_{0,1}^{-1} {\bm t}_i = \theta_i {\bm t}_i.
        \label{eq:beyn}
\end{equation}
The approximate eigenpairs are obtained as $(\widetilde{\lambda}_i,\widetilde{\bm x}_i) = (\theta_i, U_{0,1} {\bm t}_i)$.
The algorithm of the Beyn method for solving the generalized eigenvalue problem \eqref{eq:gep} is shown in Algorithm~\ref{alg:beyn}.
\begin{algorithm}[t]
\caption{The Beyn method}
\label{alg:beyn}
\begin{algorithmic}[1]
  \REQUIRE $L, N \in \mathbb{N}, V \in \mathbb{C}^{n \times L}, (z_j, \omega_j)$ for $j = 1, 2, \dots, N$
  \ENSURE Approximate eigenpairs $(\widetilde{\lambda}_i, \widetilde{\bm x}_i)$ for $i = 1, 2, \dots, \widehat{m}$
  \STATE Compute $\widehat{S}_0, \widehat{S}_1$, where $\widehat{S}_k = \sum_{j=1}^{N} \omega_j z_j^k (z_jB-A)^{-1} BV$
  \STATE Compute SVD of $\widehat{S}_0$: $\widehat{S}_0= [U_{0,1}, U_{0,2}] [\Sigma_{0,1}, O; O, \Sigma_{0,2}] [W_{0,1}, W_{0,2}]^{\rm H}$
  \STATE Compute eigenpairs $(\theta_i, {\bm t}_i)$ of $U_{0,1}^{\rm H}\widehat{S}_1W_{0,1} \Sigma^{-1}_{0,1} {\bm t}_i = \theta_i {\bm t}_i$,\\ and compute $(\widetilde{\lambda}_i, \widetilde{\bm x}_i) = (\theta_i, U_{0,1} {\bm t}_i)$ for $i = 1, 2, \dots, \widehat{m}$
\end{algorithmic}
\end{algorithm}
\section{Theoretical preliminaries for map building}
\label{sec:preparation}
As shown in Section~\ref{sec:methods}, contour integral-based eigensolvers are based on the property of the matrices $S$ and $S_k$ (Theorems~\ref{thm:Sk=CkS0} and \ref{thm:span}).
The practical algorithms are then derived by a numerical integral approximation.
As theoretical preliminaries for map building in Section~\ref{sec:map}, this section explores the properties of the approximated matrices $\widehat{S}$ and $\widehat{S}_k$. 
Here, we assume that $(z_j,\omega_j)$ satisfy the following condition:
\begin{equation}
        \sum_{j=1}^N \omega_j z_j^k \left\{ 
                \begin{array}{ll}
                        = 0, & k = 0, 1, \dots, N-2 \\
                        \neq 0, & k = -1
                \end{array}
                \right..
        \label{eq:weights}
\end{equation}
\par
If the matrix pencil $zB-A$ is diagonalizable and $(z_j,\omega_j)$ satisfies condition \eqref{eq:weights}, we have
\begin{equation*}
        \widehat{S}_k = C^k \widehat{S}_0, \quad
        C = X_r \Lambda_r \widetilde{X}_r^{\rm H},
\end{equation*}
where $X_r = [{\bm x}_1, {\bm x}_2, \ldots, {\bm x}_r]$ is a matrix whose columns are eigenvectors corresponding to finite eigenvalues, $\widetilde{X}_r = [\widetilde{\bm x}_1, \widetilde{\bm x}_2, \ldots, \widetilde{\bm x}_r]$ is a submatrix of $\widetilde{X} = X^{\rm -H}$: $\widetilde{X}_r^{\rm H} X_r = I$, and $\Lambda_r = {\rm diag}(\lambda_1, \lambda_2, \dots, \lambda_r)$; see \cite{Imakura:2015}.
In the following analysis, we introduce a similar relationship in the case that the matrix pencil $zB-A$ is non-diagonalizable.
First, we define an upper triangular Toeplitz matrix as follows.
\begin{definition}
\label{def:t}
For ${\bm a} = [a_1, a_2, \dots, a_n] \in \mathbb{C}^{1 \times n}$, define $T_n({\bm a})$ as an $n \times n$ triangular Toeplitz matrix, i.e.,
\begin{equation*}
        T_n({\bm a}) := \left(
        \begin{array}{cccc}
                a_1 & a_2 & \cdots & a_n \\
                0   & a_1 & \cdots & a_{n-1} \\
                \vdots & \ddots & \ddots & \vdots \\
                0 & \cdots & 0 & a_1
        \end{array}
        \right) \in \mathbb{C}^{n \times n}.
\end{equation*}
\end{definition}
\par
Let ${\bm a} = [a_1, a_2, \dots, a_n], {\bm b} = [b_1, b_2, \dots, b_n], {\bm c} = [c_1, c_2, \dots, c_n] \in \mathbb{C}^{1 \times n}$ and $\alpha, \beta \in \mathbb{C}$.
Then, we have
\begin{align}
        & \alpha T_n({\bm a}) + \beta T_n({\bm b}) = T_n(\alpha{\bm a} + \beta{\bm b}), \label{eq:t_linear} \\
        & T_n({\bm a}) T_n({\bm b}) = T_n({\bm c}), \quad c_i = \sum_{j=1}^i a_j b_{i-j+1}, \quad i = 1, 2, \dots, n \label{eq:t_mult}.
\end{align}
Letting ${\bm d} = [\alpha, \beta, 0, \dots, 0] \in \mathbb{C}^{1 \times n}$, we also have
\begin{align}
        & (T_n({\bm d}))^{k} = T_n({\bm d}^{(k)}), \quad {\bm d}^{(k)} = \left[ \binom k 0 \alpha^k \beta^0, \binom {k}{1} \alpha^{k-1} \beta^1, \dots, \binom {k}{n} \alpha^{k-n+1} \beta^{n-1} \right], \label{eq:t_power} \\
        & (T_n({\bm d}))^{-1} = T_n({\bm d}^{(-1)}), \quad {\bm d}^{(-1)} = \left[ \frac{1}{\alpha}, -\frac{\beta}{\alpha^2}, \dots, \frac{(-\beta)^{n-1}}{\alpha^n} \right]. \label{eq:t_inv}
\end{align}
\par
Using these relations \eqref{eq:t_linear}--\eqref{eq:t_inv}, we analyze properties of $\widehat{S}$ and $\widehat{S}_k$.
From Theorem~\ref{thm:weierstrass}, we have
\begin{align*}
        & (zB-A)^{-1} = Q 
        \left[ 
                \bigoplus_{i=1}^r \left( zI_{n_i}-J_{n_i}(\lambda_i) \right)^{-1}
                \oplus \bigoplus_{i=r+1}^d \left( zJ_{n_i}(0)-I_{n_i} \right)^{-1}
        \right]
        \widetilde{P}^{\rm H}, \\
        & B = P 
        \left[ 
                \bigoplus_{i=1}^r I_{n_i}
                \oplus \bigoplus_{i=r+1}^d J_{n_i}(0)
        \right]
        \widetilde{Q}^{\rm H},
\end{align*}
where $P := \widetilde{P}^{\rm -H}, \widetilde{Q}^{\rm H} := Q^{-1}$.
Therefore, the matrix $\widehat{S}_k$ can be written as
\begin{align}
        \widehat{S}_k &= \sum_{j=1}^N \omega_j z_j^k (z_j B - A)^{-1} B V \nonumber \\ 
        & = \sum_{j=1}^N \omega_j z_j^k Q \left\{ 
                \bigoplus_{i=1}^r \left( z_jI_{n_i}-J_{n_i}(\lambda_i) \right)^{-1} \right. \nonumber \\
                &\hphantom{= \sum_{j=1}^N \omega_j z_j^k Q \left\{ \right.} \quad \left. \oplus \bigoplus_{i=r+1}^d \left[ \left( z_jJ_{n_i}(0)-I_{n_i} \right)^{-1} J_{n_i}(0) \right]
        \right\} \widetilde{Q}^{\rm H} V \nonumber \\
        &= \left\{ \sum_{i=1}^r Q_i \left[ \sum_{j=1}^N \omega_j z_j^k \left( z_j I_{n_i} - J_{n_i}(\lambda_i) \right)^{-1} \right] \widetilde{Q}_i^{\rm H} V \right\} \nonumber \\
        &\phantom{=} \quad + \left\{ \sum_{i=r+1}^d Q_i \left[ \sum_{j=1}^N \omega_j z_j^k \left( z_j J_{n_i}(0) - I_{n_i} \right)^{-1} J_{n_i}(0) \right] \widetilde{Q}_i^{\rm H} V \right\}, 
        \label{eq:sk=sk1+sk2}
\end{align}
where $Q_i$ and $\widetilde{Q}_i$ are $n \times n_i$ submatrices of $Q$ and $\widetilde{Q}$ respectively, corresponding to the $i$-th Jordan block, i.e., $Q = [Q_1, Q_2, \dots, Q_d], \widetilde{Q} = [\widetilde{Q}_1, \widetilde{Q}_2, \dots, \widetilde{Q}_d]$.
\par
First, we consider the 1st term of $\widehat{S}_k$ \eqref{eq:sk=sk1+sk2}:
\begin{equation*}
        \widehat{S}_k^{(1)} := \sum_{i=1}^r Q_i \left[ \sum_{j=1}^N \omega_j z_j^k \left( z_j I_{n_i} - J_{n_i}(\lambda_i) \right)^{-1} \right] \widetilde{Q}_i^{\rm H} V.
\end{equation*}
From the relation
\begin{equation*}
        z_j I_{n_i}  - J_{n_i}(\lambda_i) = T_n([z_j-\lambda_i, -1, 0, \dots, 0])
\end{equation*}
and \eqref{eq:t_inv}, we have
\begin{equation*}
        (z_j I_{n_i} - J_{n_i}(\lambda_i))^{-1} = T_{n_i}\left(\left[\frac{1}{z_j-\lambda_i}, \frac{1}{(z_j-\lambda_i)^2}, \dots, \frac{1}{(z_j-\lambda_i)^{n_i}} \right]\right).
\end{equation*}
Thus, defining ${\bm f}_{k}(\lambda_i) \in \mathbb{C}^{1 \times n_i}$ as
\begin{equation}
        {\bm f}_{k}(\lambda_i) := \sum_{j=1}^N \omega_j z_j^k
        \left[\frac{1}{z_j-\lambda_i}, \frac{1}{(z_j-\lambda_i)^2}, \dots, \frac{1}{(z_j-\lambda_i)^{n_i}} \right],
        \label{eq:def_fk}
\end{equation}
from \eqref{eq:t_linear}, we have
\begin{equation}
        \sum_{j=1}^N \omega_j z_j^k (z_j I_{n_i}  - J_{n_i}(\lambda_i))^{-1} 
        = T_{n_i}\left({\bm f}_{k}(\lambda_i) \right).
        \label{eq:tfk}
\end{equation}
Therefore, $\widehat{S}^{(1)}_k$ can be rewritten as
\begin{equation}
        \widehat{S}^{(1)}_k = \sum_{i=1}^r Q_i T_{n_i}\left({\bm f}_{k}(\lambda_i) \right) \widetilde{Q}_i^{\rm H} V.
        \label{eq:sk12}
\end{equation}
\par
Here, the following propositions hold.
\begin{proposition}
\label{prop:w}
Suppose that $(\omega_j,z_j)$ satisfies condition \eqref{eq:weights}.
Then, for any $\lambda_i \neq 0$ and $0 \leq k \leq N+p-2, p \geq 1$, the relation
\begin{equation}
        \sum_{j=1}^N \frac{\omega_j z_j^k}{(z_j - \lambda_i)^p}
        = \lambda_i^k \sum_{j=1}^N \frac{\omega_j}{(z_j - \lambda_i)^p} \sum_{q=0}^{p-1} \binom k q \left( \frac{z_j - \lambda_i}{\lambda_i} \right)^q
        \label{eq:prop_weight}
\end{equation}
is satisfied.
\end{proposition}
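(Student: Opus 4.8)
The plan is to prove the identity by a direct algebraic manipulation: expand $z_j^k = \bigl((z_j - \lambda_i) + \lambda_i\bigr)^k$ by the binomial theorem, substitute into the left-hand side, and then use condition \eqref{eq:weights} to kill all the terms that are not genuinely present on the right-hand side. First I would write
\begin{equation*}
        z_j^k = \bigl((z_j-\lambda_i) + \lambda_i\bigr)^k = \sum_{q=0}^k \binom{k}{q} (z_j-\lambda_i)^q \lambda_i^{k-q},
\end{equation*}
so that
\begin{equation*}
        \sum_{j=1}^N \frac{\omega_j z_j^k}{(z_j-\lambda_i)^p}
        = \sum_{q=0}^k \binom{k}{q} \lambda_i^{k-q} \sum_{j=1}^N \frac{\omega_j}{(z_j-\lambda_i)^{p-q}}.
\end{equation*}
The terms with $q \le p-1$ are exactly the ones appearing on the right-hand side of \eqref{eq:prop_weight} (after pulling $\lambda_i^k$ out and writing $\lambda_i^{k-q} = \lambda_i^k \lambda_i^{-q}$, which matches the factor $(z_j-\lambda_i)^q/\lambda_i^q$ once the $(z_j-\lambda_i)^q$ is moved back under the sum). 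So it remains to show that every term with $q \ge p$ vanishes.

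For $q \ge p$ the inner sum is $\sum_{j=1}^N \omega_j (z_j-\lambda_i)^{q-p}$ with $q-p \ge 0$. Expanding $(z_j-\lambda_i)^{q-p}$ binomially again, this is a linear combination of $\sum_{j=1}^N \omega_j z_j^\ell$ for $0 \le \ell \le q-p$. Since $q \le k \le N+p-2$, we have $q-p \le N-2$, so every such $\ell$ satisfies $0 \le \ell \le N-2$, and by condition \eqref{eq:weights} each of these moment sums is zero. Hence the whole $q \ge p$ block drops out, leaving precisely the claimed right-hand side. Reindexing the surviving sum over $q = 0, 1, \dots, p-1$ and factoring $\lambda_i^k$ out front gives \eqref{eq:prop_weight}.

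The one point that needs a little care — and is really the only obstacle — is the bookkeeping of the index ranges: one must check that the cutoff $k \le N+p-2$ is exactly what makes $q-p \le N-2$ hold for all $q \le k$, so that condition \eqref{eq:weights} (which only controls moments up to order $N-2$) applies to every term that must vanish. The case $p=1$ should also be checked separately as a sanity check, where the statement reduces to $\sum_j \omega_j z_j^k/(z_j-\lambda_i) = \lambda_i^k \sum_j \omega_j/(z_j-\lambda_i)$ for $0 \le k \le N-1$, which follows since the difference telescopes into $\sum_j \omega_j \cdot(\text{polynomial in }z_j\text{ of degree} \le N-2)$. Everything else is routine binomial algebra.
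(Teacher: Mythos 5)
Your proof is correct and follows essentially the same route as the paper: expanding $z_j^k$ binomially about $\lambda_i$ (the paper writes $z_j^k=\lambda_i^k(1+(z_j-\lambda_i)/\lambda_i)^k$, which is the same expansion), splitting the sum at $q=p$, and using condition \eqref{eq:weights} to annihilate the terms with $q\geq p$ since $(z_j-\lambda_i)^{q-p}$ is a polynomial in $z_j$ of degree at most $N-2$. Your index bookkeeping matches the paper's, so no further changes are needed.
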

\begin{proof}
Since $\lambda_i \neq 0$, we have
\begin{equation}
        \frac{\omega_j z_j^k}{(z_j - \lambda_i)^p} 
        = \frac{\omega_j}{(z_j-\lambda_i)^p} \lambda_i^k \left( \frac{z_j}{\lambda_i} \right)^k
        = \frac{\omega_j}{(z_j-\lambda_i)^p} \lambda_i^k \left( 1 + \frac{z_j - \lambda_i}{\lambda_i} \right)^k.
        \label{eq:w1}
\end{equation}
Here, from the binomial theorem $(a+b)^k = \sum_{q=0}^k \binom kq a^{k-q}b^q$, (\ref{eq:w1}) is rewritten as
\begin{equation*}
        \frac{\omega_j z_j^k}{(z_j - \lambda_i)^p} 
        = \frac{\omega_j}{(z_j-\lambda_i)^p} \lambda_i^k \sum_{q=0}^k \binom kq \left(\frac{z_j - \lambda_i}{\lambda_i} \right)^q.
\end{equation*}
Therefore, the left-hand side of (\ref{eq:prop_weight}) is 
\begin{align}
        &\sum_{j=1}^N \frac{\omega_j z_j^k}{(z_j - \lambda_i)^p}  \nonumber \\
        &\quad = \sum_{j=1}^N \frac{\omega_j}{(z_j-\lambda_i)^p} \lambda_i^k \sum_{q=0}^k \binom kq \left(\frac{z_j - \lambda_i}{\lambda_i} \right)^q  \nonumber \\
        &\quad = \sum_{j=1}^N \frac{\omega_j}{(z_j-\lambda_i)^p} \lambda_i^k \left[ \sum_{q=0}^{p-1} \binom kq \left(\frac{z_j - \lambda_i}{\lambda_i} \right)^q + \sum_{q=p}^{k} \binom kq \left(\frac{z_j - \lambda_i}{\lambda_i} \right)^q \right] \nonumber \\
        &\quad = \left[ \lambda_i^k \sum_{q=0}^{p-1} \binom kq \lambda_i^{-q} \sum_{j=1}^N \omega_j (z_j - \lambda_i)^{q-p} \right] \nonumber \\
        &\phantom{\quad = } \quad + \left[ \lambda_i^k \sum_{q=p}^k \binom kq \lambda_i^{-q} \sum_{j=1}^N \omega_j (z_j - \lambda_i)^{q-p} \right].
        \label{eq:left}
\end{align}
Because condition \eqref{eq:weights} is satisfied, we have
\begin{equation*}
        \sum_{j=1}^N \omega_j (z_j - \lambda_i)^{q-p} = 0, \quad
        q = p, p+1, \dots, N+p-2,
\end{equation*}
thus, for $k \leq N+p-2$, the 2nd term of \eqref{eq:left} becomes 0.
Therefore, for $k = 0, 1, \ldots, N+p-2$, we obtain
\begin{equation*}
        \sum_{j=1}^N \frac{\omega_j z_j^k}{(z_j - \lambda_i)^p} 
         = \sum_{j=1}^N \frac{\omega_j}{(z_j-\lambda_i)^p} \lambda_i^k \sum_{q=0}^{p-1} \binom kq \left(\frac{z_j - \lambda_i}{\lambda_i} \right)^q,
\end{equation*}
which proves Proposition~\ref{prop:w}.
\end{proof}
\begin{proposition}
\label{prop:jk}
Suppose that $(\omega_j,z_j)$ satisfies condition \eqref{eq:weights}.
Then, for any $0 \leq k \leq N-1$, the relation
\begin{equation}
        T_{n_i}\left( {\bm f}_k(\lambda_i) \right)
        = (J_{n_i}(\lambda_i))^k T_{n_i}\left({\bm f}_0(\lambda_i) \right)
        \label{eq:tk=jkt0}
\end{equation}
is satisfied, where ${\bm f}_k(\lambda_i)$ is defined by \eqref{eq:def_fk} and $0^0 = 1$.
\end{proposition}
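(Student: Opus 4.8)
The plan is to reduce the matrix identity \eqref{eq:tk=jkt0} to the scalar identities already packaged in Proposition~\ref{prop:w}, using the Toeplitz calculus \eqref{eq:t_linear}--\eqref{eq:t_inv}. Since $J_{n_i}(\lambda_i)=T_{n_i}([\lambda_i,1,0,\dots,0])$, formula \eqref{eq:t_power} (with $\alpha=\lambda_i$, $\beta=1$, and the convention $0^0=1$ when $\lambda_i=0$) gives $(J_{n_i}(\lambda_i))^k=T_{n_i}({\bm g}_k)$ with $({\bm g}_k)_q=\binom{k}{q-1}\lambda_i^{k-q+1}$ for $q=1,\dots,n_i$. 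Then \eqref{eq:t_mult} turns the right-hand side of \eqref{eq:tk=jkt0} into a single triangular Toeplitz matrix $T_{n_i}({\bm c})$ whose $p$-th first-row entry is
\[
  c_p=\sum_{q=1}^{p}\binom{k}{q-1}\lambda_i^{k-q+1}\sum_{j=1}^{N}\frac{\omega_j}{(z_j-\lambda_i)^{p-q+1}},\qquad p=1,\dots,n_i.
\]
Because $T_{n_i}$ is a bijection between row vectors and triangular Toeplitz matrices, \eqref{eq:tk=jkt0} is equivalent to the $n_i$ scalar identities $({\bm f}_k(\lambda_i))_p=c_p$, i.e.
\[
  \sum_{j=1}^{N}\frac{\omega_j z_j^k}{(z_j-\lambda_i)^p}=\sum_{q=1}^{p}\binom{k}{q-1}\lambda_i^{k-q+1}\sum_{j=1}^{N}\frac{\omega_j}{(z_j-\lambda_i)^{p-q+1}},\qquad p=1,\dots,n_i.
\]

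For $\lambda_i\neq 0$ this is exactly Proposition~\ref{prop:w}: substituting $q\mapsto q+1$ on the right-hand side and factoring out $\lambda_i^k$ and $(z_j-\lambda_i)^{-p}$ rewrites it as $\lambda_i^k\sum_{j=1}^N\frac{\omega_j}{(z_j-\lambda_i)^p}\sum_{q=0}^{p-1}\binom{k}{q}\big(\frac{z_j-\lambda_i}{\lambda_i}\big)^q$, which is \eqref{eq:prop_weight}. Proposition~\ref{prop:w} needs $0\le k\le N+p-2$ for the $p$-th identity, and since $p\ge1$ the assumption $0\le k\le N-1$ (tight at $p=1$) covers every $p\in\{1,\dots,n_i\}$ at once.

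The case $\lambda_i=0$ must be handled directly, since Proposition~\ref{prop:w} excludes it. Here ${\bm g}_k$ collapses (via $0^0=1$) to the $(k+1)$-st unit vector ${\bm e}_{k+1}$, or to $0$ if $k+1>n_i$, so $c_p=0$ for $p\le k$ and $c_p=\sum_{j=1}^N\omega_j z_j^{k-p}$ for $p>k$; on the other hand $({\bm f}_k(0))_p=\sum_{j=1}^N\omega_j z_j^{k-p}$ for every $p$. For $1\le p\le k$ one has $0\le k-p\le N-2$ (using $k\le N-1$), so condition \eqref{eq:weights} forces $({\bm f}_k(0))_p=0=c_p$, while for $p>k$ the two sides coincide by inspection; hence $({\bm f}_k(0))_p=c_p$ for all $p$.

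I do not expect a genuine obstacle: all the analytic content is Proposition~\ref{prop:w}, and what remains is bookkeeping. The main points requiring care are keeping the index shifts in \eqref{eq:t_power} and \eqref{eq:t_mult} consistent with the summation range in \eqref{eq:prop_weight}, verifying that the hypothesis $k\le N-1$ is exactly what makes Proposition~\ref{prop:w} applicable for all components simultaneously, and dispatching the degenerate case $\lambda_i=0$ (equivalently, checking that the $0^0=1$ convention specializes every formula correctly).
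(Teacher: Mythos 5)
Your proposal is correct and follows essentially the same route as the paper's proof: expand $(J_{n_i}(\lambda_i))^k$ via \eqref{eq:t_power}, use \eqref{eq:t_mult} to reduce the matrix identity to componentwise scalar identities, invoke Proposition~\ref{prop:w} for $\lambda_i \neq 0$ (with the same range check $k \le N+p-2$), and treat $\lambda_i = 0$ directly via condition \eqref{eq:weights}. No substantive differences.
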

\begin{proof}
We first consider the case of $\lambda_i=0$.
From $J_{n_i}(0) = T_{n_i}([0, 1, 0, \ldots, 0])$, there exists a vector ${\bm t}_{0,k} \in \mathbb{C}^{1 \times n_i}$ satisfying
\begin{equation*}
        T_{n_i}({\bm t}_{0,k}) := (J_{n_i}(0))^k T_{n_i}\left({\bm f}_0(0) \right).
\end{equation*}
Then, from \eqref{eq:t_mult} and \eqref{eq:t_power}, the $p$-th element $( {\bm t}_{0,k} )_p$ of ${\bm t}_{0,k}$ can be written as
\begin{equation*}
        ( {\bm t}_{0,k} )_p = \left\{
                \begin{array}{ll}
                        0 & (1 \leq p \leq k) \\
                        \sum_{j=1}^N \omega_j z_j^{k-p} & (k < p \leq n_i)
                \end{array}
        \right..
\end{equation*}
On the other hand, from the definition \eqref{eq:def_fk}, the vector ${\bm f}_k(0)$ can be written as
\begin{equation*}
        {\bm f}_k(0) = \sum_{j=1}^N \omega_j [z_j^{k-1}, z_j^{k-2}, \ldots, z_j^{k-n_i}].
\end{equation*}
Since the first $k$ elements of ${\bm f}_k(0)$ are 0 from condition \eqref{eq:weights}, we have ${\bm f}_k(0) = {\bm t}_{0,k}$.
Therefore, \eqref{eq:tk=jkt0} is satisfied for $\lambda_i = 0$.
\par
Next, we consider the case of $\lambda_i \neq 0$.
From $J_{n_i}(\lambda_i) = T_{n_i}([\lambda_i,1,0,\dots,0])$ and \eqref{eq:t_power}, we have
\begin{equation}
        (J_{n_i}(\lambda_i))^k = T_{n_i}\left(\left[\lambda_i^k, \binom k1 \lambda_i^{k-1}, \dots, \binom k {n_i} \lambda_i^{k-n_i+1} \right]\right).
        \label{eq:jk}
\end{equation}
Let ${\bm t}_k \in \mathbb{C}^{1 \times n_i}$ be a vector satisfying
\begin{equation*}
        T_{n_i}({\bm t}_k) := (J_{n_i}(\lambda_i))^k T_{n_i}\left( {\bm f}_0(\lambda_i) \right).
\end{equation*}
Then, from \eqref{eq:t_mult}, \eqref{eq:jk} and the definition of ${\bm f}_k(\lambda_i)$ \eqref{eq:def_fk}, the $p$-th element $({\bm t}_k)_p$ of ${\bm t}_k$ can be written as
\begin{align*}
        ({\bm t}_k)_p
        &= \sum_{q = 1}^p \binom k {q-1} \lambda_i^{k-q+1} \sum_{j=1}^N \omega_j \frac{1}{(z_j-\lambda_i)^{p-q+1}} \nonumber \\
        &= \lambda_i^k \sum_{j=1}^N \frac{\omega_j}{(z_j-\lambda_i)^p} \sum_{q = 1}^p \binom k {q-1} \frac{(z_j-\lambda_i)^{q-1}}{ \lambda_i^{q-1}} \nonumber \\
        &= \lambda_i^k \sum_{j=1}^N \frac{\omega_j}{(z_j-\lambda_i)^p} \sum_{q = 0}^{p-1} \binom k {q} \left( \frac{z_j-\lambda_i}{ \lambda_i} \right)^q.
        \label{eq:tp}
\end{align*}
By Proposition~\ref{prop:w}, for $0 \leq k \leq N+p-2$, we obtain
\begin{equation*}
        ({\bm t}_k)_p = \sum_{j=1}^N \frac{\omega_j z_j^k}{(z_j - \lambda_i)^p}.
\end{equation*}
Therefore, for $0 \leq k \leq N-1$, we have
\begin{equation*}
        {\bm t}_k 
        = \left[ \sum_{j=1}^N \frac{\omega_j z_j^k}{z_j-\lambda_i}, \sum_{j=1}^N \frac{\omega_j z_j^k}{(z_j-\lambda_i)^2}, \dots, \sum_{j=1}^N \frac{\omega_j z_j^k}{(z_j-\lambda_i)^{n_i}} \right]
        = {\bm f}_k(\lambda_i),
\end{equation*}
and 
\begin{equation*}
        T_{n_i}\left( {\bm f}_k(\lambda_i) \right)
        = T_{n_i}\left( {\bm t}_k \right)
        = (J_{n_i}(\lambda_i))^k T_{n_i}\left({\bm f}_0(\lambda_i) \right)
\end{equation*}
is satisfied, proving Proposition~\ref{prop:jk}.
\end{proof}
%
%
%
From Proposition~\ref{prop:jk}, by substituting \eqref{eq:tk=jkt0} into \eqref{eq:sk12} and using \eqref{eq:tfk}, we obtain
\begin{align}
        \widehat{S}_k^{(1)} 
        &= \sum_{i=1}^r Q_i \left( J_{n_i}(\lambda_i) \right) ^k T_{n_i}({\bm f}_0(\lambda_i)) \widetilde{Q}_i^{\rm H} V \nonumber \\
        &= \sum_{i=1}^r Q_i \left( J_{n_i}(\lambda_i) \right) ^k \left[ \sum_{j=1}^N \omega_j \left( z_j I_{n_i} - J_{n_i}(\lambda_i) \right)^{-1} \right] \widetilde{Q}_i^{\rm H} V
        \label{eq:sk1}
\end{align}
%
for any $0 \leq k \leq N - 1$.
\par
Now consider the 2nd term of $\widehat{S}_k$ \eqref{eq:sk=sk1+sk2}, i.e.,
\begin{equation*}
        \widehat{S}_k^{(2)} := \sum_{i=r+1}^d Q_i \left[ \sum_{j=1}^N \omega_j z_j^k \left( z_j J_{n_i}(0) - I_{n_i} \right)^{-1} J_{n_i}(0) \right] \widetilde{Q}_i^{\rm H} V.
\end{equation*}
From the relations
\begin{align*}
        &z_j J_{n_i}(0) - I_{n_i} = T_{n_i}([-1, z_j, 0, \dots, 0]), \\
        &J_{n_i}(0) = T_{n_i}([0, 1, 0, \dots, 0])
\end{align*}
and \eqref{eq:t_mult} and \eqref{eq:t_inv}, we have
\begin{equation*}
        \left(z_j J_{n_i}(0) - I\right)^{-1} J_{n_i}(0) 
        = - T_{n_i}([0, 1, z_j, z_j^2, \dots, z_j^{n_i-2}]).
\end{equation*}
In addition, from \eqref{eq:t_linear}, we have
\begin{align*}
        & \sum_{j=1}^N \omega_j z_j^k \left(z_j J_{n_i}(0) - I_{n_i} \right)^{-1} J_{n_i}(0) \\
        & \quad = - T_{n_i} \left( \left[ 0, \sum_{j=1}^N \omega_j z_j^k, \sum_{j=1}^N \omega_j z_j^{k+1}, \dots, \sum_{j=1}^N \omega_j z_j^{k+n_i-2}\right]\right).
\end{align*}
Here, because $(z_j,\omega_j)$ satisfies condition \eqref{eq:weights},
\begin{equation*}
        \sum_{j=1}^N \omega_j z_j^k \left(z_j J_{n_i}(0) - I_{n_i} \right)^{-1} J_{n_i}(0) = O
\end{equation*}
is satisfied for any $0 \leq k \leq N-n_i$.
Therefore, letting
\begin{equation*}
        \eta := \max_{r+1 \leq i \leq d} n_i,
\end{equation*}
the 2nd term of $\widehat{S}_k$ is $O$ for any $0 \leq k \leq N - \eta$, i.e.,
\begin{equation}
        \widehat{S}_k^{(2)} = O.
        \label{eq:sk2}
\end{equation}
\par
From \eqref{eq:sk1} and \eqref{eq:sk2}, we have the following theorems.
\begin{theorem}
\label{thm:hat_Sk=CkS0}
%
%
%
Suppose that $(\omega_j,z_j)$ satisfies condition \eqref{eq:weights}.
Then, we have
\begin{equation*}
        \widehat{S}_k = C^k \widehat{S}_0, \quad
        C = Q_{1:r} J_{1:r} \widetilde{Q}_{1:r}^{\rm H},
\end{equation*}
for any $0 \leq k \leq N-\eta$, where
\begin{equation*}
        Q_{1:r} := [Q_1, Q_2, \dots, Q_r], \quad
        \widetilde{Q}_{1:r} := [\widetilde{Q}_1, \widetilde{Q}_2, \dots, \widetilde{Q}_r], \quad
        J_{1:r} := \bigoplus_{i=1}^r J_{n_i}(\lambda_i).
\end{equation*}
\end{theorem}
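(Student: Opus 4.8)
The plan is to assemble the pieces that are already in place: essentially all the analytic content has been discharged in Propositions~\ref{prop:w} and \ref{prop:jk} and in the derivations of \eqref{eq:sk1} and \eqref{eq:sk2}, so what remains is bookkeeping with the block structure coming from the Weierstrass form.

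First I would fix $k$ with $0 \leq k \leq N-\eta$. Since $\eta \geq 1$ this range lies inside $0 \leq k \leq N-1$, so both \eqref{eq:sk1} and \eqref{eq:sk2} are available. Writing $\widehat{S}_k = \widehat{S}_k^{(1)} + \widehat{S}_k^{(2)}$ as in \eqref{eq:sk=sk1+sk2}, using $\widehat{S}_k^{(2)} = O$ from \eqref{eq:sk2}, and inserting \eqref{eq:sk1}, I obtain
\begin{equation*}
\widehat{S}_k = \sum_{i=1}^r Q_i \big( J_{n_i}(\lambda_i) \big)^k G_i \widetilde{Q}_i^{\rm H} V, \qquad G_i := \sum_{j=1}^N \omega_j \big( z_j I_{n_i} - J_{n_i}(\lambda_i) \big)^{-1},
\end{equation*}
for every such $k$; the case $k=0$ reads $\widehat{S}_0 = \sum_{i=1}^r Q_i G_i \widetilde{Q}_i^{\rm H} V$, and in particular the columns of $\widehat{S}_0$ lie in $\mathcal{R}(Q_{1:r})$.

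Next I would use the biorthogonality implied by $\widetilde{Q}^{\rm H} = Q^{-1}$: reading off the blocks of $\widetilde{Q}^{\rm H} Q = I_n$ gives $\widetilde{Q}_i^{\rm H} Q_l = \delta_{il} I_{n_i}$. Since $C = Q_{1:r} J_{1:r} \widetilde{Q}_{1:r}^{\rm H} = \sum_{i=1}^r Q_i J_{n_i}(\lambda_i) \widetilde{Q}_i^{\rm H}$, a short induction in which the biorthogonality annihilates all mixed terms yields $C^k = \sum_{i=1}^r Q_i \big( J_{n_i}(\lambda_i) \big)^k \widetilde{Q}_i^{\rm H}$ for $k \geq 1$. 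Multiplying this by the expression for $\widehat{S}_0$ from the previous paragraph and again using $\widetilde{Q}_i^{\rm H} Q_l = \delta_{il} I_{n_i}$ to kill the cross terms leaves exactly $\sum_{i=1}^r Q_i \big( J_{n_i}(\lambda_i) \big)^k G_i \widetilde{Q}_i^{\rm H} V = \widehat{S}_k$; the case $k=0$ is trivial (or follows in the same way, since $\sum_i Q_i \widetilde{Q}_i^{\rm H}$ acts as the identity on $\mathcal{R}(Q_{1:r})$). This gives $\widehat{S}_k = C^k \widehat{S}_0$, the assertion.

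I do not anticipate any real difficulty here. The only points requiring a little care are the index ranges — the bound $N-\eta$ is the one forced by the infinite-eigenvalue block through \eqref{eq:sk2}, and it is at least as restrictive as the $N-1$ needed for \eqref{eq:sk1} — and the degenerate situation $r=d$ with no infinite eigenvalues, where $\eta$ should be read so that the admissible range is simply $0 \leq k \leq N-1$.
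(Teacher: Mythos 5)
Your proposal is correct and follows essentially the same route as the paper's proof: combine \eqref{eq:sk1} and \eqref{eq:sk2} for $0 \leq k \leq N-\eta$ and then use the biorthogonality $\widetilde{Q}_i^{\rm H} Q_l = \delta_{il} I_{n_i}$ to identify the result with $C^k \widehat{S}_0$, which the paper does in the equivalent concatenated form $Q_{1:r} J_{1:r}^k F_{1:r} \widetilde{Q}_{1:r}^{\rm H} V = (Q_{1:r} J_{1:r} \widetilde{Q}_{1:r}^{\rm H})^k (Q_{1:r} F_{1:r} \widetilde{Q}_{1:r}^{\rm H} V)$. Your explicit remarks on the index range $N-\eta \leq N-1$ and the degenerate case $r=d$ are harmless additions not spelled out in the paper.
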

\begin{proof}
From \eqref{eq:sk1} and \eqref{eq:sk2}, we have
\begin{equation*}
        \widehat{S}_k = \sum_{i=1}^r Q_i (J_{n_i}(\lambda_i))^k \left[ \sum_{j=1}^N \omega_j \left( z_j I_{n_i} - J_{n_i}(\lambda_i) \right)^{-1} \right] \widetilde{Q}_i^{\rm H} V,
\end{equation*}
for any $0 \leq k \leq N-\eta$.
Here, we let
\begin{align*}
        &F_{n_i} := T_{n_i}\left({\bm f}_0(\lambda_i)\right) = \sum_{j=1}^N \omega_j \left( z_j I_{n_i} - J_{n_i}(\lambda_i) \right)^{-1}, \\
        &F_{1:r} := \bigoplus_{i=1}^r F_{n_i},
\end{align*}
then we obtain
\begin{align*}
        \widehat{S}_k 
        &= \sum_{i=1}^r Q_i \left( J_{n_i}(\lambda_i) \right)^k F_{n_i} \widetilde{Q}_i^{\rm H} V \nonumber \\
        &= Q_{1:r} J_{1:r}^k F_{1:r} \widetilde{Q}_{1:r}^{\rm H} V \nonumber \\
        &= (Q_{1:r} J_{1:r} \widetilde{Q}_{1:r}^{\rm H})^k (Q_{1:r} F_{1:r} \widetilde{Q}_{1:r}^{\rm H} V) \nonumber \\
        &= C^k \widehat{S}_0.
\end{align*}
Therefore, Theorem~\ref{thm:hat_Sk=CkS0} is proven.
\end{proof}
\begin{theorem}
\label{thm:eigenproblem}
If $(z_j,\omega_j)$ satisfies condition \eqref{eq:weights}, then the standard eigenvalue problem
\begin{equation}
        C {\bm x}_i = \lambda_i {\bm x}_i, \quad
        {\bm x}_i \in \mathcal{R}(Q_{1:r}), \quad
        \lambda_i \in \Omega \subset \mathbb{C}, 
        \label{eq:sep_r}
\end{equation}
is equivalent to the generalized eigenvalue problem \eqref{eq:gep}.
\end{theorem}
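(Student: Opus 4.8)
The plan is to derive everything from two algebraic consequences of the Weierstrass data, taking as given the closed form $C = Q_{1:r}J_{1:r}\widetilde{Q}_{1:r}^{\rm H}$ from Theorem~\ref{thm:hat_Sk=CkS0} (this is the only place the hypothesis \eqref{eq:weights} enters). First I would record the biorthogonality that comes for free from $\widetilde{Q}^{\rm H} = Q^{-1}$, namely $\widetilde{Q}_i^{\rm H}Q_j = \delta_{ij}I_{n_i}$ and hence $\widetilde{Q}_{1:r}^{\rm H}Q_{1:r} = I$. This immediately gives $CQ_{1:r} = Q_{1:r}J_{1:r}$, so $\mathcal{R}(Q_{1:r})$ is $C$-invariant and $C$ acts on it, in the basis $Q_{1:r}$, exactly as $J_{1:r}$; since $Q_{1:r}$ has full column rank, every ${\bm x}\in\mathcal{R}(Q_{1:r})$ is uniquely of the form ${\bm x} = Q_{1:r}{\bm w}$.

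Second I would extract from Theorem~\ref{thm:weierstrass} the identities obtained by reading off the constant and the linear coefficients in $z$ of $\widetilde{P}^{\rm H}(zB-A)Q$: these yield $\widetilde{P}^{\rm H}AQ = \bigl(\bigoplus_{i=1}^r J_{n_i}(\lambda_i)\bigr)\oplus\bigl(\bigoplus_{i=r+1}^d I_{n_i}\bigr)$ and $\widetilde{P}^{\rm H}BQ = \bigl(\bigoplus_{i=1}^r I_{n_i}\bigr)\oplus\bigl(\bigoplus_{i=r+1}^d J_{n_i}(0)\bigr)$. Writing $P_{1:r}$ for the first $\sum_{i=1}^r n_i$ columns of $P := (\widetilde{P}^{\rm H})^{-1}$ and restricting to the first $r$ blocks, these collapse to $AQ_{1:r} = P_{1:r}J_{1:r} = (BQ_{1:r})J_{1:r}$, while $BQ_{1:r} = P_{1:r}$ has full column rank as a submatrix of the nonsingular $P$. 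The relation $AQ_{1:r} = BQ_{1:r}J_{1:r}$, together with the full column ranks of $Q_{1:r}$ and $BQ_{1:r}$, is the workhorse for the rest.

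Third I would prove the equivalence as a one-to-one correspondence of eigenpairs. For the forward direction, if $(\lambda_i,{\bm x}_i)$ solves \eqref{eq:gep} with $\lambda_i\in\Omega\subset\mathbb{C}$, then $\lambda_i$ is a finite eigenvalue and ${\bm x}_i$ lies in the pencil eigenspace for $\lambda_i$, which is spanned by the leading columns of the $Q$-blocks with that eigenvalue and therefore sits inside $\mathcal{R}(Q_{1:r})$; writing ${\bm x}_i = Q_{1:r}{\bm w}$, the equation $A{\bm x}_i = \lambda_i B{\bm x}_i$ combined with $AQ_{1:r} = BQ_{1:r}J_{1:r}$ and the full column rank of $BQ_{1:r}$ forces $J_{1:r}{\bm w} = \lambda_i{\bm w}$, and then $C{\bm x}_i = Q_{1:r}J_{1:r}{\bm w} = \lambda_i{\bm x}_i$, so $(\lambda_i,{\bm x}_i)$ solves \eqref{eq:sep_r}. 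For the converse, if $(\lambda_i,{\bm x}_i)$ solves \eqref{eq:sep_r}, write ${\bm x}_i = Q_{1:r}{\bm w}$ with ${\bm w}\neq{\bm 0}$; then $Q_{1:r}(J_{1:r}{\bm w} - \lambda_i{\bm w}) = {\bm 0}$ and the full column rank of $Q_{1:r}$ give $J_{1:r}{\bm w} = \lambda_i{\bm w}$, after which $A{\bm x}_i = AQ_{1:r}{\bm w} = BQ_{1:r}J_{1:r}{\bm w} = \lambda_i B{\bm x}_i$ with $\lambda_i\in\Omega$, so $(\lambda_i,{\bm x}_i)$ solves \eqref{eq:gep}.

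The one point I expect to need care is spelling out why the side condition ${\bm x}_i\in\mathcal{R}(Q_{1:r})$ in \eqref{eq:sep_r} is indispensable and why passing to the finite part $J_{1:r}$ discards no target eigenpair: the matrix $C$ is singular in general and hence carries a large eigenvalue-$0$ subspace of spurious vectors outside $\mathcal{R}(Q_{1:r})$, so without the constraint \eqref{eq:sep_r} would acquire extra ``eigenpairs'' with $\lambda = 0$ whenever $0\in\Omega$; on the other hand $C$ has no finite eigenvalue other than $\lambda_1,\dots,\lambda_r$, and since $\Omega\subset\mathbb{C}$ excludes the infinite eigenvalues, the blocks $1{:}r$ already capture the entire solution set of \eqref{eq:gep}. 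Everything else is routine block-matrix bookkeeping.
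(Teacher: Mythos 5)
Your proof is correct and takes essentially the same route as the paper: the paper's (much terser) proof simply asserts from the definition $C = Q_{1:r}J_{1:r}\widetilde{Q}_{1:r}^{\rm H}$ that $C$ shares the finite right eigenpairs of the pencil $zB-A$ while its remaining eigenvalues are $0$ with eigenvectors outside $\mathcal{R}(Q_{1:r})$, which is exactly the eigenstructure you establish explicitly through $CQ_{1:r}=Q_{1:r}J_{1:r}$ and $AQ_{1:r}=BQ_{1:r}J_{1:r}$. Your write-up merely supplies the block-matrix bookkeeping and the two-way eigenpair correspondence that the paper leaves implicit.
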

\begin{proof}
From the definition of $C := Q_{1:r} J_{1:r} \widetilde{Q}_{1:r}^{\rm H}$, the matrix $C$ has the same right eigenpairs $(\lambda_i,{\bm x}_i), i = 1, 2, \dots, r$ as the matrix pencil $zB-A$, i.e., ${\bm x}_i \in \mathcal{R}(Q_{1:r})$.
The other eigenvalues of $C$ are $0$, and their corresponding eigenvectors are equivalent to the right eigenvectors associated with the infinite eigenvalues $\lambda_i = \infty$ of $zB-A$, i.e., ${\bm x}_i \not\in \mathcal{R}(Q_{1:r})$.
Therefore, Theorem~\ref{thm:eigenproblem} is proven.
\end{proof}
\section{Map of the relationships among contour integral-based eigensolvers}
\label{sec:map}
Section~\ref{sec:preparation} analyzed the properties of the approximated matrices $\widehat{S}$ and $\widehat{S}_k$ (Theorem~\ref{thm:hat_Sk=CkS0}) and introduced the standard eigenvalue problem \eqref{eq:sep_r} equivalent to the target eigenvalue problem \eqref{eq:gep} (Theorem~\ref{thm:eigenproblem}).
\par
In this section, based on Theorems~\ref{thm:hat_Sk=CkS0} and \ref{thm:eigenproblem}, we reconsider the algorithms of the contour integral-based eigensolvers in terms of projection methods and map the relationships, focusing on their subspace used, the type of projection and the problem to which they are applied implicitly.
\subsection{Reconsideration of the contour integral-based eigensolvers}
As described in Section~\ref{sec:methods}, the subspaces $\mathcal{R}(S)$ and $\mathcal{R}(S_k)$ contain only the target eigenvectors ${\bm x}_i, \lambda_i \in \Omega$ based on Cauchy's integral formula.
In contrast, the subspaces $\mathcal{R}(\widehat{S})$ and $\mathcal{R}(\widehat{S}_k)$ are rich in the component of the target eigenvectors as will be shown in Section~\ref{sec:error}.
\subsubsection{The block SS--RR method and the FEAST eigensolvers}
The block SS--RR method and the FEAST eigensolvers are easily reconfigured as projection methods.
\par
The block SS--RR method solves $A{\bm x}_i = \lambda_i B {\bm x}_i$ through the Rayleigh--Ritz procedure on $\mathcal{R}(\widehat{S})$.
The block SS--RR method (Algorithm~\ref{alg:ss-rr}) is derived using a low-rank approximation of the matrix $\widehat{S}$ as shown in Section~2.2.
Since $\mathcal{R}(\widehat{S})$ is rich in the component of the target eigenvectors, the target eigenpairs are well approximated by the Rayleigh--Ritz procedure.
\par
The FEAST eigensolver conducts accelerated subspace iteration with the Rayleigh--Ritz procedure.
In each iteration of the FEAST eigensolver, the Rayleigh--Ritz procedure solves $A{\bm x}_i = \lambda_i B {\bm x}_i$ on $\mathcal{R}(\widehat{S}_0)$.
Like $\mathcal{R}(\widehat{S})$ in the block SS--RR method, $\mathcal{R}(\widehat{S}_0)$ is rich in the component of the target eigenvectors; therefore, the FEAST eigensolver also well approximates the target eigenpairs by the Rayleigh--Ritz procedure.
\subsubsection{The block SS--Hankel method, the block SS--Arnoldi method and the Beyn method}
From Theorem~\ref{thm:hat_Sk=CkS0}, we rewrite the block complex moments $\widehat{\mu}_k^\square$ of the block SS--Hankel method as 
\begin{equation*}
        \widehat{\mu}_k^\square = \widetilde{V}^{\rm H} \widehat{S}_k = \widetilde{V}^{\rm H} C \widehat{S}_{k-1} = \cdots = \widetilde{V}^{\rm H} C^k \widehat{S}_0.
\end{equation*}
Thus, the block Hankel matrices $\widehat{H}_M^\square, \widehat{H}_M^{\square <}$ \eqref{eq:hankel} become
\begin{align*}
        & \widehat{H}_M^\square = \left(
        \begin{array}{cccc}
                \widetilde{V}^{\rm H} \widehat{S}_0 & \widetilde{V}^{\rm H} \widehat{S}_1 & \cdots & \widetilde{V}^{\rm H} \widehat{S}_{M-1} \\
                \widetilde{V}^{\rm H} C \widehat{S}_0 & \widetilde{V}^{\rm H} C \widehat{S}_1 & \cdots & \widetilde{V}^{\rm H} C \widehat{S}_{M-1} \\
                \vdots & \vdots & \ddots & \vdots \\
                \widetilde{V}^{\rm H} C^{M-1} \widehat{S}_0 & \widetilde{V}^{\rm H} C^{M-1} \widehat{S}_1 & \cdots & \widetilde{V}^{\rm H} C^{M-1} \widehat{S}_{M-1}
        \end{array}
        \right), \\
        & \widehat{H}_M^{\square <} = \left(
        \begin{array}{cccc}
                \widetilde{V}^{\rm H} C \widehat{S}_0 & \widetilde{V}^{\rm H} C \widehat{S}_1 & \cdots & \widetilde{V}^{\rm H} C \widehat{S}_{M-1} \\
                \widetilde{V}^{\rm H} C^2 \widehat{S}_0 & \widetilde{V}^{\rm H} C^2 \widehat{S}_1 & \cdots & \widetilde{V}^{\rm H} C^2 \widehat{S}_{M-1} \\
                \vdots & \vdots & \ddots & \vdots \\
                \widetilde{V}^{\rm H} C^{M} \widehat{S}_0 & \widetilde{V}^{\rm H} C^{M} \widehat{S}_1 & \cdots & \widetilde{V}^{\rm H} C^{M} \widehat{S}_{M-1}
        \end{array}
        \right),
\end{align*}
respectively.
Here, let
\begin{equation*}
        \widetilde{S} := [\widetilde{V}, C^{\rm H}\widetilde{V}, (C^{\rm H})^2\widetilde{V}, \dots, (C^{\rm H})^{M-1}\widetilde{V}].
\end{equation*}
Then, we have
\begin{equation*}
        \widehat{H}_M^\square = \widetilde{S}^{\rm H} \widehat{S}, \quad
        \widehat{H}_M^{\square <} = \widetilde{S}^{\rm H} C \widehat{S}.
\end{equation*}
Therefore, the generalized eigenvalue problem \eqref{eq:hankel_gep} is rewritten as
\begin{equation}
        \widetilde{S}^{\rm H} C \widehat{S} {\bm y}_i = \theta_i \widetilde{S}^{\rm H} \widehat{S}{\bm y}_i.
        \label{eq:hankel_gep2}
\end{equation}
In this form, the block SS--Hankel method can be regarded as a Petrov--Galerkin-type projection method for solving the standard eigenvalue problem \eqref{eq:sep_r}, i.e., the approximate solution $\widetilde{\bm x}_i$ and the corresponding residual ${\bm r}_i:= C \widetilde{\bm x}_i-\theta_i \widetilde{\bm x}_i$ satisfy $\widetilde{\bm x}_i \in \mathcal{R}(\widehat{S})$ and ${\bm r}_i \bot \mathcal{R}(\widetilde{S})$, respectively.
Recognizing that $\mathcal{R}(\widehat{S}) \subset \mathcal{R}(Q_{1:r})$ and applying Theorem~\ref{thm:eigenproblem}, we find that the block SS--Hankel method obtains the target eigenpairs.
\par
Since the Petrov--Galerkin-type projection method for \eqref{eq:sep_r} does not perform the (bi-)orthogonalization; that is $\widetilde{S}^{\rm H} \widehat{S} \neq I$, \eqref{eq:hankel_gep2} describes the generalized eigenvalue problem.
The practical algorithm of the block SS--Hankel method (Algorithm~\ref{alg:ss-hankel}) is derived from a low-rank approximation of \eqref{eq:hankel_gep2}.
\par
From Theorem~\ref{thm:hat_Sk=CkS0}, we have
\begin{equation*}
        \mathcal{R}(\widehat{S}) = \mathcal{K}_M^\square(C,\widehat{S}_0)
\end{equation*}
similar to Theorem~\ref{thm:arnoldi}.
Therefore, the block SS--Arnoldi method can be regarded as a block Arnoldi method with $\mathcal{K}_M^\square(C,\widehat{S}_0)$ for solving the standard eigenvalue problem \eqref{eq:sep_r}.
Moreover, for $M \leq N-\eta$, any $\widehat{E}_M \in \mathcal{B}_M^\square(C,\widehat{S}_0)$ can be written as
\begin{equation*}
        \widehat{E}_M = \sum_{j=1}^N \omega_j \sum_{i=0}^{M-1} z_j^i (z_jB-A)^{-1} B V \alpha_i, \quad
        \alpha_i \in \mathbb{C}^{L \times L}.
\end{equation*}
and the matrix multiplication of $C$ by $\widehat{E}_M$ is given by 
\begin{equation*}
        C \widehat{E}_M = \sum_{j=1}^N \omega_j z_j \sum_{i=0}^{M-1} z_j^i (z_jB-A)^{-1} B V \alpha_i.
\end{equation*}
similar to Theorem~\ref{thm:mat-vec}.
Therefore, in each iteration, the matrix multiplication of $C$ can be performed by a numerical integration.
\par
The Beyn method can be also regarded as a projection method for solving the standard eigenvalue problem \eqref{eq:sep_r}.
From the relation $\widehat{S}_1 = C \widehat{S}_0$ and the singular value decomposition \eqref{eq:beyn_svd} of $\widehat{S}_0$, the coefficient matrix of the eigenvalue problem \eqref{eq:beyn} obtained from the Beyn method becomes
\begin{equation*}
        U_{0,1}^{\rm H} \widehat{S}_1 W_{0,1} \Sigma_1^{-1} = U_{0,1}^{\rm H} C \widehat{S}_0 W_{0,1} \Sigma_{0,1}^{-1} = U_{0,1}^{\rm H} C U_{0,1}.
\end{equation*}
Therefore, the Byen method can be regarded as a Rayleigh--Ritz-type projection method on $\mathcal{R}(U_{0,1})$ for solving \eqref{eq:sep_r}, where $\mathcal{R}(U_{0,1})$ is obtained from a low-rank approximation of $\widehat{S}_0$.
\subsection{Map of the contour integral-based eigensolvers}
As shown in Section 4.1.1, the block SS--RR method and the FEAST eigensolver are based on the Rayleigh--Ritz procedure, which solve the generalized eigenvalue problem $A{\bm x}_i = \lambda_i B {\bm x}_i$.
These methods use subspaces $\mathcal{R}(\widehat{S})$ and $\mathcal{R}(\widehat{S}_0)$, respectively.
The FEAST eigensolver can be regarded as a simplified algorithm of the block SS--RR method with $M=1$ and no orthogonalization of the basis.
Instead, the FEAST eigensolver presupposes an iteration based on an accelerated subspace iteration.
Here, we note that the block SS--RR method can also use an iteration technique for improving accuracy as demonstrated in \cite{Imakura:2015, Sakurai:2013}.
\par
In contrast, as shown in Section 4.1.2, the block SS--Hankel, block SS--Arnoldi and Beyn methods can be regarded as projection methods for solving the standard eigenvalue problem \eqref{eq:sep_r} instead of $A{\bm x}_i=\lambda_i B {\bm x}_i$.
The block SS--Hankel method is a Petrov--Galerkin-type method with $\mathcal{R}(\widehat{S})$, the block SS--Arnoldi method is a block Arnoldi method with $\mathcal{R}(\widehat{S})=\mathcal{K}_M^\square(C,\widehat{S}_0)$ and the Beyn method is a Rayleigh--Ritz-type method with $\mathcal{R}(\widehat{S}_0)$.
Note that because these methods are based on Theorems~\ref{thm:hat_Sk=CkS0} and \ref{thm:eigenproblem}, $(z_j,\omega_j)$ should satisfy condition \eqref{eq:weights}.
\par
Since the block SS--Hankel, block SS--RR and block SS--Arnoldi methods use $\mathcal{R}(\widehat{S})$ as the subspace, the maximum dimension of the subspace is $LM$.
In contrast, the FEAST eigensolver and the Beyn method use the subspace $\mathcal{R}(\widehat{S}_0)$, whose maximum dimension is $L$; that is, ${\rm rank}(\widehat{S}_0)$ can not be larger than the number $L$ of right-hand sides of linear systems at each quadrature point.
Therefore, for the same subspace dimension, the FEAST eigensolver and the Beyn method should incur larger computational costs than the block SS--Hankel, block SS--RR and block SS--Arnoldi methods for solving the linear systems with multiple right-hand sides.
\par
A map of the relationship among the contour integral-based eigensolvers is presented in Fig.~\ref{fig:map}.
\begin{figure}
 \centering
 \includegraphics[scale=0.45, bb=0 70 720 540]{./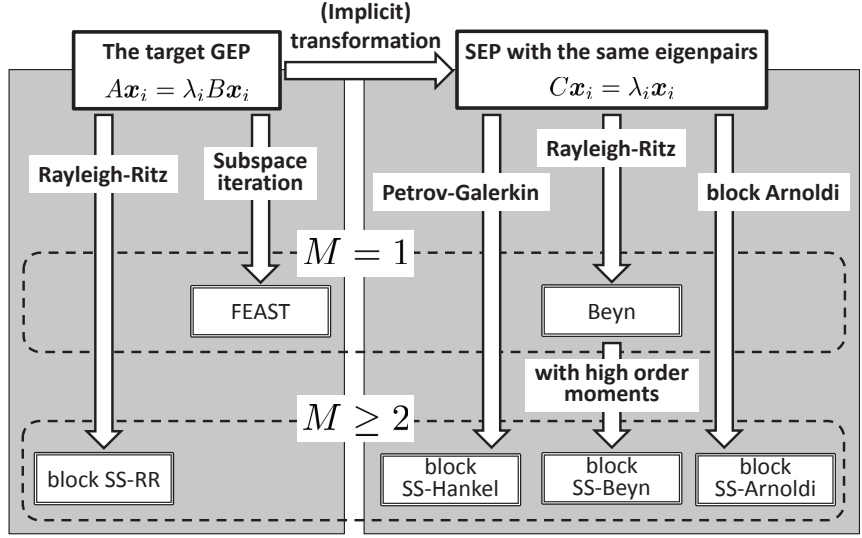}
 \caption{A map of the relationship among the contour integral-based eigensolvers.}
 \label{fig:map}
\end{figure}
\subsection{Proposal for a block SS--Beyn method}
As mentioned above, one iteration of the FEAST eigensolver is a simplified version of the block SS--RR method with $M=1$ and no orthogonalization.
In contrast, a derivative of the Beyn method with $M \geq 2$ has not been proposed.
Although this paper mainly aims to analyze the relationships among these methods and provide a map, we also propose an extension of the Beyn method to $M \geq 2$ as with the block SS--Hankel, block SS--RR and block SS--Arnoldi methods.
\par
As shown in Section~2.2.5, from the relation $\widehat{S}_1 = C \widehat{S}_0$ and a singular value decomposition of $\widehat{S}_0$, we can derive a small size eigenvalue problem \eqref{eq:beyn} of the Beyn method.
As shown in Section~4.1.2, the Beyn method can be also regarded as the Rayleigh--Ritz projection method with $\mathcal{R}(\widehat{S}_0)$ for solving the standard eigenvalue problem \eqref{eq:sep_r}.
To extend the Beyn method, here we consider the Rayleigh--Ritz projection method with $\mathcal{R}(\widehat{S})$ for solving \eqref{eq:sep_r}, i.e.,
\begin{equation*}
        U^{\rm H} C U {\bm t}_i = \theta_i {\bm t}_i
\end{equation*}
where $\widehat{S} = U \Sigma W^{\rm H}$ is a singular value decomposition of $\widehat{S}$.
Using Theorem~\ref{thm:hat_Sk=CkS0}, the coefficient matrix $U^{\rm T} C U$ is replaced as
\begin{equation*}
        U^{\rm H} C U = U^{\rm H} C \widehat{S} W_1 \Sigma_1^{-1} =  U^{\rm H} \widehat{S}_+ W_1 \Sigma_1^{-1},
\end{equation*}
where
\begin{equation*}
        \widehat{S}_+ := [\widehat{S}_1, \widehat{S}_2, \ldots, \widehat{S}_{M}] = C \widehat{S}.
\end{equation*}
\par
In practice, we can also use a low-rank approximation of $\widehat{S}$,
\begin{equation*}
        \widehat{S} = [U_1, U_2] \left[
                \begin{array}{ll}
                        \Sigma_1 & O \\
                        O & \Sigma_2
                \end{array}
        \right] \left[
                \begin{array}{ll}
                        W_1^{\rm H} \\
                        W_2^{\rm H}
                \end{array}
        \right] \approx U_1 \Sigma_1 W_1^{\rm H}.
\end{equation*}
Then, the reduced eigenvalue problem becomes
\begin{equation*}
        U_1^{\rm H} \widehat{S}_+ W_1 \Sigma_1^{-1} {\bm t}_i = \theta_i {\bm t}_i.
\end{equation*}
The approximate eigenpairs are obtained as $(\widetilde{\lambda}_i,\widetilde{\bm x}_i) = (\theta_i, U_{1} {\bm t}_i)$.
In this paper, we call this method as the block SS--Beyn method and show it in Algorithm~\ref{alg:ex-beyn}.
%
\par
Both the block SS--RR method and the block SS--Beyn method are Rayleigh--Ritz-type projection methods with $\mathcal{R}(\widehat{S})$.
However, since the methods are targeted at different eigenvalue problems, they have different definitions of the residual vector.
Therefore, these methods mathematically differ when $B \neq I$.
In contrast, the block SS--Arnoldi method and the block SS--Beyn method without a low-rank approximation, i.e., $\widehat{m} = LM$, are mathematically equivalent.
\begin{algorithm}[t]
\caption{A block SS--Beyn method}
\label{alg:ex-beyn}
\begin{algorithmic}[1]
  \REQUIRE $L, M, N \in \mathbb{N}, V \in \mathbb{C}^{n \times L}, (z_j, \omega_j)$ for $j = 1, 2, \dots, N$
  \ENSURE Approximate eigenpairs $(\widetilde{\lambda}_i, \widetilde{\bm x}_i)$ for $i = 1, 2, \dots, \widehat{m}$
  \STATE Compute $\widehat{S}_k = \sum_{j=1}^{N} \omega_j z_j^k (z_jB-A)^{-1} BV$,\\ and set $\widehat{S} = [\widehat{S}_0, \widehat{S}_1, \ldots, \widehat{S}_{M-1}], \widehat{S}_+ = [\widehat{S}_1, \widehat{S}_2, \ldots, \widehat{S}_{M}]$
  \STATE Compute SVD of $\widehat{S}$: $\widehat{S}= [U_1, U_2] [\Sigma_1, O; O, \Sigma_2] [W_1, W_2]^{\rm H}$
  \STATE Compute eigenpairs $(\theta_i, {\bm t}_i)$ of $U_1^{\rm H}\widehat{S}_+W_1 \Sigma^{-1}_1 {\bm t}_i = \theta_i {\bm t}_i$,\\ and compute $(\widetilde{\lambda}_i, \widetilde{\bm x}_i) = (\theta_i, U_1 {\bm t}_i)$ for $i = 1, 2, \dots, \widehat{m}$
\end{algorithmic}
\end{algorithm}
\section{Error analyses of the contour integral-based eigensolvers with an iteration technique}
\label{sec:error}
As shown in Section~2.2.3, the FEAST eigensolver is based on the iteration.
Other iterative contour integral-based eigensolvers have been designed to improve the accuracy \cite{Imakura:2015, Sakurai:2013}.
The basic concept is the iterative computation of the matrix $\widehat{S}_0^{(\ell-1)}$, from the initial matrix $\widehat{S}_0^{(0)} = V$ as follows:
\begin{equation}
        \widehat{S}^{(\nu)}_0 := \sum_{j=1}^{N} \omega_j (z_jB-A)^{-1} B \widehat{S}_0^{(\nu-1)}, \quad
        \nu = 1, 2, \ldots, \ell-1.
        \label{eq:iter1}
\end{equation}
The matrices $\widehat{S}_k^{(\ell)}$ and $\widehat{S}^{(\ell)}$ are then constructed from $\widehat{S}_0^{(\ell-1)}$ as
\begin{equation}
        \widehat{S}^{(\ell)} := [\widehat{S}_0^{(\ell)}, \widehat{S}_1^{(\ell)}, \ldots, \widehat{S}_{M-1}^{(\ell)} ], \quad
        \widehat{S}^{(\ell)}_k := \sum_{j=1}^{N} \omega_j z_j^k (z_jB-A)^{-1} B \widehat{S}_0^{(\ell-1)},
        \label{eq:iter2}
\end{equation}
and $\mathcal{R}( \widehat{S}_0^{(\ell)} )$ and $\mathcal{R}( \widehat{S}^{(\ell)} )$ are used as subspaces rather than $\mathcal{R}( \widehat{S}_0 )$ and $\mathcal{R}( \widehat{S} )$.
The $\ell$ iterations of the FEAST eigensolver can be regarded as a Rayleigh--Ritz-type projection method on $\mathcal{R}(\widehat{S}_0^{(\ell)})$.
\par
From the discussion in Section~\ref{sec:preparation}, the matrix $\widehat{S}_0^{(\ell)}$ can be expressed as
\begin{equation*}
        \widehat{S}_0^{(\ell)} = \left( Q_{1:r} F_{1:r} \widetilde{Q}_{1:r}^{\rm H} \right)^\ell V.
\end{equation*}
Here, the eigenvalues of the linear operator $\widehat{P} := Q_{1:r} F_{1:r} \widetilde{Q}_{1:r}^{\rm H}$ are given by
\begin{equation*}
        f(\lambda_i) := \sum_{j=1}^N \frac{\omega_j}{z_j - \lambda_i}.
\end{equation*}
The function $f(\lambda)$, called the filter function, is used in the analyses of some eigensolvers with diagonalizable matrix pencil \cite{Polizzi:2014, Imakura:2015, Schofield:2012, Guttel:2015}.
The function $f(\lambda)$ is characterized by $|f(\lambda)|\approx 1$ in the inner region and $|f(\lambda)|\approx 0$ in the outer region.
Fig.~\ref{fig:filter} plots the filter function when $\Omega$ is the unit circle and integration is performed by the $N$-point trapezoidal rule.
\begin{figure}[t]
\begin{center}
\subfloat[On the real axis for $N=16, 32, 64$.]{
\includegraphics[bb=0 0 360 216, width=60mm]{./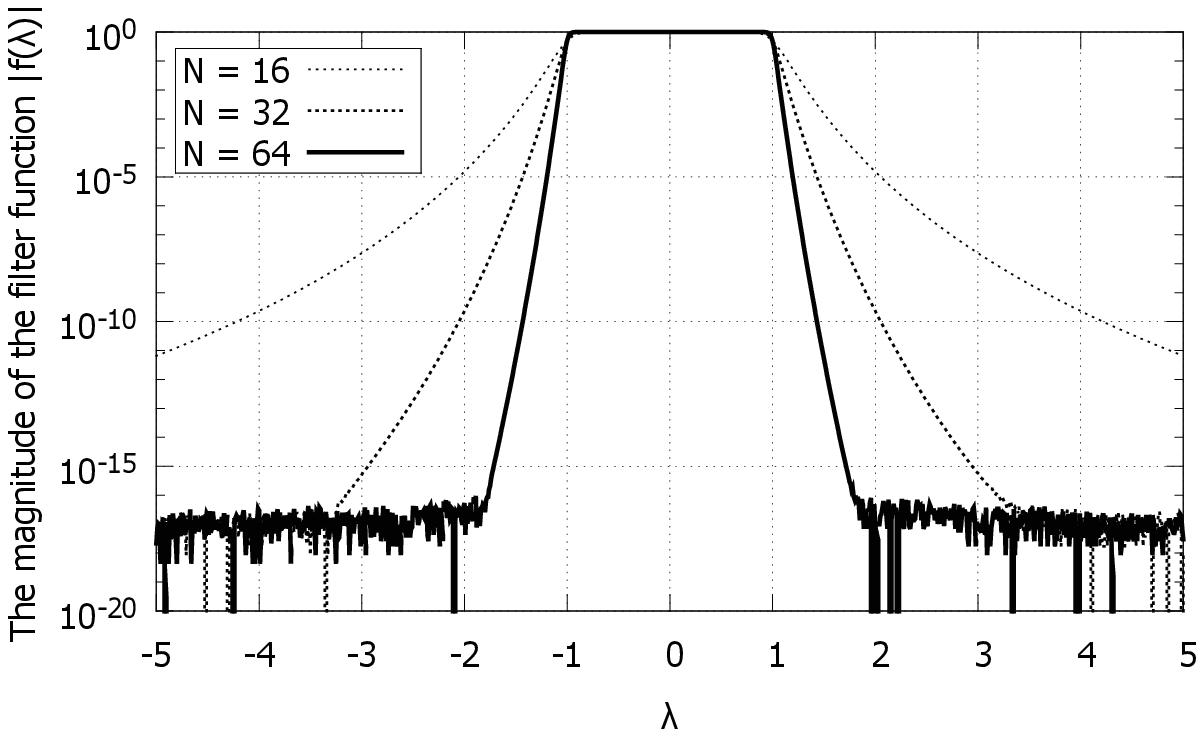}
}
\subfloat[On the complex plane for $N=32$.]{
\includegraphics[bb=0 0 360 216, width=60mm]{./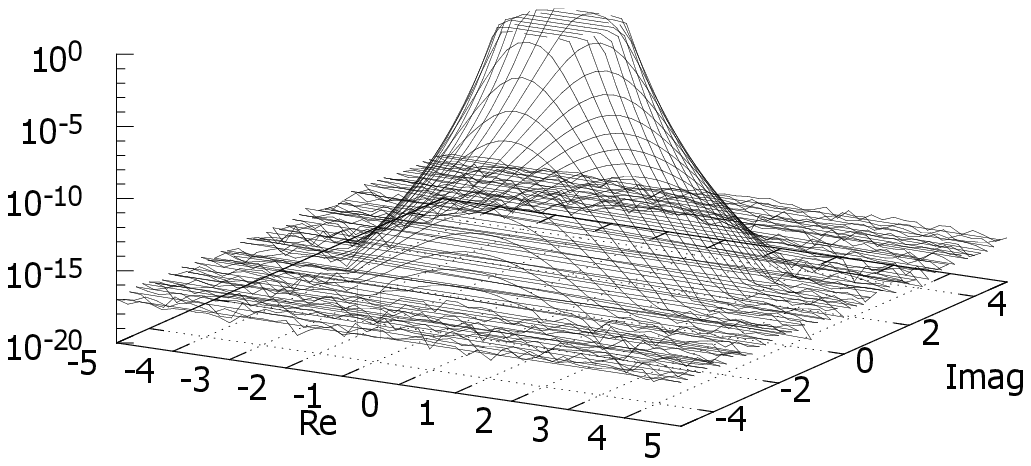}
}
\caption{Magnitude of filter function $|f(\lambda)|$ of the $N$-point trapezoidal rule for the unit circle region $\Omega$.}
\label{fig:filter}
\end{center}
\end{figure}
\par
Error analyses of the block SS--RR method with the iteration technique \eqref{eq:iter1} and \eqref{eq:iter2} and the FEAST eigensolver in the diagonalizable case were given in \cite{Imakura:2015, Polizzi:2014, Guttel:2015}.
In these error analyses, the block SS--RR method and the FEAST eigensolver were treated as projection methods with the subspaces $\mathcal{R}(\widehat{S})$ and $\mathcal{R}(\widehat{S}_0)$, respectively.
In Section~\ref{sec:map}, we explained that the other contour integral-based eigensolvers are also projection methods with the subspaces $\mathcal{R}(\widehat{S})$ and $\mathcal{R}(\widehat{S}_0)$, but were designed to solve the standard eigenvalue problem \eqref{eq:sep_r}.
In this section, we establish the error bounds of the contour integral-based eigensolvers with the iteration technique \eqref{eq:iter1} and \eqref{eq:iter2}, omitting the low-rank approximation, in non-diagonalizable cases.
\subsection{Error bounds of the block SS--RR method and the FEAST eigensolver in the diagonalizable case}
Let $(\lambda_i,{\bm x}_i)$ be exact finite eigenpairs of the generalized eigenvalue problem $A {\bm x}_i = \lambda_i B {\bm x}_i$.
Assume that $f(\lambda_i)$ are ordered by decreasing magnitude $|f(\lambda_i)| \geq |f(\lambda_{i+1})|$.
Define $\mathcal{P}^{(\ell)}$ and $\mathcal{P}_{LM}$ as orthogonal projectors onto the subspaces $\mathcal{R}( \widehat{S}^{(\ell)} )$ and the spectral projector with an invariant subspace ${\rm span}\{ {\bm x}_1, {\bm x}_2, \ldots, {\bm x}_{LM} \}$, respectively.
Assume that the matrix $\mathcal{P}_{LM} [V, CV, \dots, C^{M-1}V]$ is full rank.
Then, for each eigenvector ${\bm x}_i, i = 1, 2, \ldots, LM$, there exists a unique vector ${\bm s}_i \in \mathcal{K}_M^\square(C,V)$ such that $\mathcal{P}_{LM} {\bm s}_i = {\bm x}_i$.
\par
In the diagonalizable case, for the error analysis of the block SS--RR method and the FEAST eigensolver, the following inequality was given in \cite{Imakura:2015} and \cite{Polizzi:2014, Guttel:2015} for $M=1$:
\begin{equation}
        \| (I - \mathcal{P^{(\ell)}} ) {\bm x}_i \|_2
        \leq \alpha \beta_i \left| \frac{f(\lambda_{LM+1})}{f(\lambda_i)} \right|^\ell, \quad
        i = 1, 2, \ldots, LM,
        \label{eq:ineq1}
\end{equation}
where $\alpha = \| X_r \|_2 \| \widetilde{X}_r \|_2$ and $\beta_i = \| {\bm x}_i - {\bm s}_i \|_2$.
Note that, in the diagonalizable case, the linear operator $\widehat{P}$ can be expressed as $\widehat{P} = X_r f(\Lambda_r) \widetilde{X}_r^{\rm H}$, where $f(\Lambda_r) := {\rm diag}(f(\lambda_1), f(\lambda_2), \dots f(\lambda_r))$.
An additional error bound is given in \cite{Imakura:2015}:
\begin{equation}
        \| (A_{\mathcal{P}^{(\ell)}} - \lambda_i B_{\mathcal{P}^{(\ell)}} ) {\bm x}_i \|_2
        \leq \gamma_i \| (I - \mathcal{P^{(\ell)}} ) {\bm x}_i \|_2 
        \leq \alpha \beta_i \gamma_i \left| \frac{f(\lambda_{LM+1})}{f(\lambda_i)} \right|^\ell,
        \label{eq:ineq2}
\end{equation}
for $i = 1, 2, \ldots, LM$, where $A_{\mathcal{P}^{(\ell)}} := \mathcal{P}^{(\ell)} A \mathcal{P}^{(\ell)}, B_{\mathcal{P}^{(\ell)}} := \mathcal{P}^{(\ell)} B \mathcal{P}^{(\ell)}$ and $\gamma_i = \| \mathcal{P}^{(\ell)} (A - \lambda_i B) (I - \mathcal{P}^{(\ell)}) \|_2$.
\par
Inequality \eqref{eq:ineq1} determines the accuracy of the subspace $\mathcal{R}(\widehat{S})$, whereas inequality \eqref{eq:ineq2} defines the error bound of the block SS--RR method and the FEAST eigensolver.
\subsection{Error bounds of the contour integral-based eigensolvers in the non-diagonalizable case}
The constant $\alpha$ in \eqref{eq:ineq1} derives from the following inequality for a diagonalizable matrix $G_{\rm diag} = X D X^{-1}$
\begin{equation*}
        \| G_{\rm diag}^\ell \|_2 
        \leq \| X \|_2 \| D^\ell \|_2 \| X^{-1} \|_2
        \leq \| X \|_2 \| X^{-1} \|_2 (\rho(G_{\rm diag}))^\ell,
\end{equation*}
where $\rho(G_{\rm diag})$ is the spectral radius of $G_{\rm diag}$.
This inequality is extended to a non-diagonalizable matrix $D_{\rm non} = X J X^{-1}$ as follows:
\begin{equation*}
        \| G_{\rm non}^\ell \|_2 
        \leq \| X \|_2 \| J^\ell \|_2 \| X^{-1} \|_2
        \leq 2 \| X \|_2 \| X^{-1} \|_2 \ell^{\eta-1} (\rho(G_{\rm non}))^\ell,
\end{equation*}
where $\rho(G_{\rm non})$ is the spectral radius of $G_{\rm non}$ and $\eta$ is the maximum size of the Jordan blocks.
Using this inequality, the error bound of the contour integral-based eigensolvers in the non-diagonalizable case is given as
\begin{equation}
        \| (I - \mathcal{P^{(\ell)}} ) {\bm x}_i \|_2
        \leq \alpha^\prime \beta_i \ell^{\eta-1} \left| \frac{f(\lambda_{LM+1})}{f(\lambda_i)} \right|^\ell, \quad
        i = 1, 2, \ldots, LM,
        \label{eq:ineq3}
\end{equation}
where $\alpha^\prime = 2 \| Q_{1:r} \|_2 \| \widetilde{Q}_{1:r} \|_2$.
From \eqref{eq:ineq3}, the error bound of the block SS--RR method and the FEAST eigensolver in the non-diagonalizable case is given by
\begin{equation}
        \| (A_{\mathcal{P}^{(\ell)}} - \lambda_i B_{\mathcal{P}^{(\ell)}} ) {\bm x}_i \|_2
        \leq \gamma_i \| (I - \mathcal{P^{(\ell)}} ) {\bm x}_i \|_2
        \leq \alpha^\prime \beta_i \gamma_i \ell^{\eta-1} \left| \frac{f(\lambda_{LM+1})}{f(\lambda_i)} \right|^\ell,
        \label{eq:ineq_rf}
\end{equation}
for $i = 1, 2, \dots LM$.
\par
The inequality \eqref{eq:ineq_rf} derives from the error bound of the Rayleigh--Ritz procedure for generalized eigenvalue problems $A{\bm x}_i = \lambda_i B {\bm x}_i$.
From the error bound of the Rayleigh--Ritz procedure for standard eigenvalue problems \cite[Theorem~4.3]{Saad:2011}, we derive the error bound of the block SS--Arnoldi and block SS--Beyn methods as
\begin{equation}
        \| (C_{\mathcal{P}^{(\ell)}} - \lambda_i I ) \mathcal{P}^{(\ell)}{\bm x}_i \|_2
        \leq \gamma^\prime \| (I - \mathcal{P^{(\ell)}} ) {\bm x}_i \|_2
        \leq \alpha^\prime \beta_i \gamma^\prime \ell^{\eta-1} \left| \frac{f(\lambda_{LM+1})}{f(\lambda_i)} \right|^\ell,
        \label{eq:ineq_ab}
\end{equation}
for $i = 1, 2, \ldots, LM$, where $C_{\mathcal{P}^{(\ell)}} := \mathcal{P}^{(\ell)} C \mathcal{P}^{(\ell)}$ and $\gamma^\prime = \| \mathcal{P}^{(\ell)} C (I - \mathcal{P}^{(\ell)}) \|_2$.
\par
In addition, let $\mathcal{Q}$ be the oblique projector onto $\mathcal{R}(\widehat{S}^{(\ell)})$ and orthogonal to $\mathcal{R}( \widetilde{S})$.
Then, from the error bound of the Petrov--Galerkin-type projection method for standard eigenvalue problems \cite[Theorem~4.7]{Saad:2011}, the error bound of the block SS--Hankel method is derived as follows:
\begin{equation}
        \| (C_{\mathcal{P}^{(\ell)}}^\mathcal{Q} - \lambda_i I ) \mathcal{P}^{(\ell)}{\bm x}_i \|_2
        \leq \gamma_i^{\prime \prime} \| (I - \mathcal{P^{(\ell)}} ) {\bm x}_i \|_2
        \leq \alpha^\prime \beta_i \gamma_i^{\prime \prime} \ell^{\eta-1} \left| \frac{f(\lambda_{LM+1})}{f(\lambda_i)} \right|^\ell,
        \label{eq:ineq_h}
\end{equation}
for $i = 1, 2, \dots, LM$, where $C_{\mathcal{P}^{(\ell)}}^\mathcal{Q} := \mathcal{Q} C \mathcal{P}^{(\ell)}$ and $\gamma_i^{\prime \prime} = \| \mathcal{Q} (C - \lambda_i I) (I - \mathcal{P}^{(\ell)}) \|_2$.
\par
Error bounds \eqref{eq:ineq_rf}, \eqref{eq:ineq_ab} and \eqref{eq:ineq_h} indicate that given a sufficiently large subspace, i.e., $|f(\lambda_{LM+1})/f(\lambda_i)|^\ell \approx 0$, the contour integral-based eigensolvers can obtain the accurate target eigenpairs even if some eigenvalues exist outside but near the region and the target matrix pencil is non-diagonalizable.
\section{Numerical experiments}
\label{sec:experiments}
This paper mainly aims to analyze the relationships among the contour integral-based eigensolvers and to map these relationships; although, in this section, the efficiency of the block SS--Hankel, block SS--RR, block SS--Arnoldi and block SS--Beyn methods are compared in numerical experiments with $M = 1, 2, 4, 8$ and $16$.
\par
These methods compute 1000 eigenvalues in the interval $[-1,1]$ and the corresponding eigenvectors of a real symmetric generalized eigenvalue problem with 20000 dimensional dense and random matrices.
$\Gamma$ is an ellipse with center 0 and major and minor axises 1 and 0.1, respectively.
The parameters are $(L,M) = (4096,1), (2048,2), (1024,4)$, $(512,8)$, $(256,16)$ (note that $LM=4096$) and $N=32$.
Because of a symmetry of the problem, the number of required linear systems is $N/2=16$.
For the low-rank approximation, we used singular values $\sigma_i$ satisfying $\sigma_i/\sigma_1 \geq 10^{-14}$ and their corresponding singular vectors, where $\sigma_1$ is the largest singular value.
\par
The numerical experiments were carried out in double precision arithmetic on 8 nodes of COMA at CCS, University of Tsukuba.
COMA has two Intel Xeon E5-2670v2 (2.5 GHz) and two Intel Xeon Phi 7110P (61 cores) per node.
In these numerical experiments, we used only the CPU part.
The algorithms were implemented in Fortran 90 and MPI, and executed with 8 [node] $\times$ 2 [process/node] $\times$ 8 [thread/process].
\begin{table}[!t]
\small
\caption{Computational results of the block SS--Hankel, block SS--RR, block SS--Arnoldi and block SS--Beyn methods with $M=1, 2, 4, 8$ and $16$.}
\label{table:results}
\begin{center}
\begin{tabular}{rrrrrrrrrr} \hline
 \multicolumn{1}{c}{$M$} & \multicolumn{1}{c}{$\widehat{m}$} & & \multicolumn{4}{c}{Time [sec.]} & & \multicolumn{2}{c}{residual norm} \\\cline{4-7} \cline{9-10}
 \multicolumn{1}{c}{} & \multicolumn{1}{c}{} & & \multicolumn{1}{c}{$t_{\rm LU}$} & \multicolumn{1}{c}{$t_{\rm Solve}$} & \multicolumn{1}{c}{$t_{\rm Other}$} & \multicolumn{1}{c}{$t_{\rm Total}$} & & \multicolumn{1}{c}{\footnotesize $\max_{\lambda_i \in \Omega}\|{\bm r}_i\|_2$} & \multicolumn{1}{c}{\footnotesize $\min_{\lambda_i \in \Omega}\|{\bm r}_i\|_2$} \\\hline \hline
 \\
 \multicolumn{10}{l}{block SS--Hankel method} \\ \hline
 1  & 1274 & & 126.47  & 97.80  & 41.57  & 265.84  & & $1.72 \times 10^{-14}$ & $3.06 \times 10^{-15}$ \\
 2  & 1291 & & 126.38  & 49.02  & 28.74  & 204.14  & & $1.12 \times 10^{-12}$ & $2.72 \times 10^{-15}$ \\
 4  & 1320 & & 126.46  & 25.40  & 25.93  & 177.78  & & $2.15 \times 10^{-14}$ & $3.16 \times 10^{-15}$ \\
 8  & 1419 & & 126.33  & 13.53  & 26.39  & 166.25  & & $1.31 \times 10^{-11}$ & $1.66 \times 10^{-14}$ \\
 16 & 2206 & & 126.24  &  7.65  & 32.41  & 166.30  & & $1.64 \times 10^{-06}$ & $1.59 \times 10^{-11}$ \\\hline
 \\
 \multicolumn{10}{l}{block SS--RR method} \\ \hline
 1  & 1283 & & 126.45  & 97.27  & 38.62  & 262.33  & & $1.34 \times 10^{-13}$ & $1.05 \times 10^{-13}$ \\
 2  & 1292 & & 126.31  & 48.77  & 38.84  & 213.92  & & $1.35 \times 10^{-13}$ & $9.56 \times 10^{-14}$ \\
 4  & 1304 & & 126.34  & 25.22  & 38.49  & 190.05  & & $1.73 \times 10^{-13}$ & $9.89 \times 10^{-14}$ \\
 8  & 1340 & & 126.33  & 13.46  & 38.78  & 178.57  & & $5.53 \times 10^{-13}$ & $1.16 \times 10^{-13}$ \\
 16 & 1461 & & 126.49  &  7.65  & 40.84  & 174.98  & & $1.34 \times 10^{-11}$ & $1.24 \times 10^{-13}$ \\\hline
 \\
 \multicolumn{10}{l}{block SS--Arnoldi method} \\ \hline
 1  & 4096 & & 125.96  & 97.13  & 94.58  & 317.66  & & $4.72 \times 10^{-08}$ & $4.46 \times 10^{-12}$ \\
 2  & 4096 & & 126.43  & 48.84  & 62.11  & 237.37  & & $5.24 \times 10^{-08}$ & $1.99 \times 10^{-13}$ \\
 4  & 4096 & & 126.13  & 25.20  & 52.61  & 203.94  & & $2.64 \times 10^{-08}$ & $5.24 \times 10^{-13}$ \\
 8  & 4096 & & 126.23  & 13.46  & 49.32  & 189.02  & & $9.05 \times 10^{-09}$ & $8.80 \times 10^{-13}$ \\
 16 & 4096 & & 126.35  &  7.63  & 54.41  & 188.38  & & $9.31 \times 10^{-07}$ & $7.70 \times 10^{-13}$ \\\hline
 \\
 \multicolumn{10}{l}{block SS--Beyn method} \\ \hline
 1  & 1283 & & 126.17  & 97.24  & 32.63  & 256.05  & & $1.34 \times 10^{-13}$ & $1.06 \times 10^{-13}$ \\
 2  & 1292 & & 126.48  & 48.76  & 32.14  & 207.37  & & $1.36 \times 10^{-13}$ & $9.58 \times 10^{-14}$ \\
 4  & 1304 & & 126.22  & 25.22  & 31.25  & 182.69  & & $1.74 \times 10^{-13}$ & $9.91 \times 10^{-14}$ \\
 8  & 1340 & & 126.21  & 13.44  & 31.09  & 170.74  & & $5.54 \times 10^{-13}$ & $1.16 \times 10^{-13}$ \\
 16 & 1461 & & 126.45  &  7.65  & 32.25  & 166.35  & & $1.90 \times 10^{-10}$ & $1.25 \times 10^{-13}$ \\\hline
\end{tabular}
\end{center}
\end{table}
\par
The numerical results are presented in Table~\ref{table:results}.
First, we consider the numerical rank $\widehat{m}$.
Comparing $M$ dependence of the numerical rank $\widehat{m}$ in the block SS--Hankel, block SS--RR and block SS--Beyn methods, we observe that the numerical rank $\widehat{m}$ increases with increasing $M$.
This is causally related to the property of the subspace $\mathcal{K}_M^\square(C,V)$, because $\widehat{S}$ is written as
\begin{equation*}
        \widehat{S} = \left( Q_{1:r} F_{1:r} \widehat{Q}_{1:r}^{\rm H} \right) [V, CV, \dots, C^{M-1}V].
\end{equation*}
For $M=1$, the subspace $\mathcal{K}_1^\square(C,V) = \mathcal{R}(V)$ is unbiased for all eigenvectors, since $V$ is a random matrix.
On the other hand, for $M \geq 2$, the subspace $\mathcal{K}_M^\square(C,V)$ contains eigenvectors corresponding exterior eigenvalues well.
Therefore, for computing interior eigenvalues, the numerical rank $\widehat{m}$ for $M=16$ is expected to be larger than for $M=1$.
%
\par
Next, we consider the computation time.
The computation times of the LU factorization, forward and back substitutions and the other computation time including the singular value decomposition and orthogonalization are denoted by $t_{\rm LU}, t_{\rm Solve}, t_{\rm Other}$, respectively.
The total computation time is also denoted by $t_{\rm Total}$.
We observe, from Table~\ref{table:results}, that the most time-consuming part is to solve linear systems with multiple-right hand sides ($t_{\rm LU} + t_{\rm Solve}$).
In particular, $t_{\rm Solve}$ is much larger for $M=1$ than for $M=16$, because the number of right-hand sides for $M=1$ is 16 times larger than for $M=16$.
Consequently, $t_{\rm Total}$ increases with decreasing $M$.
\par
We now focus on $t_{\rm Other}$.
The block SS--Arnoldi method consumes much greater $t_{\rm Other}$ than the other methods because its current version applies no low-rank approximation technique to reduce the computational costs and improve the stability \cite{Imakura:2014}.
For the block SS--Hankel, block SS--RR and block SS--Beyn methods, $t_{\rm Other}$ is smaller as $M$ and the numerical rank $\widehat{m}$ are smaller.
In addition, the block SS--Hankel method consumes smallest $t_{\rm Other}$ among tested methods, because it performs no matrix orthogonalization.
\par
Finally, we consider the accuracy of the computed eigenpairs.
The block SS--Hankel and block SS--Arnoldi methods are less accurate than the other methods, specifically for $M=16$.
This result is attributed to no matrix orthogonalization in the block SS--Hankel method, and to no low-rank approximation in the block SS--Arnoldi method.
On the other hand, the block SS--RR and block SS--Beyn methods show high accuracy even for $M=16$.
\section{Conclusions}
\label{sec:conclusions}
In this paper, we analyzed and mapped the mathematical relationships among the algorithms of the typical contour integral-based methods for solving generalized eigenvalue problems \eqref{eq:gep}: the block SS--Hankel method, the block SS--RR method, the FEAST eigensolver, the block SS--Arnoldi method and the Beyn method.
We found that the block SS--RR method and the FEAST eigensolver are projection methods for $A{\bm x}_i=\lambda_i B{\bm x}_i$, whereas the block SS--Hankel, block SS--Arnoldi and Beyn methods are projection methods for the standard eigenvalue problem $C{\bm x}_i=\lambda_i {\bm x}_i$.
From the map of the algorithms, we also extended the existing Beyn method to $M \geq 2$.
Our numerical experiments indicated that increasing $M$ reduces the computational costs (relative to $M=1$).
\par
In future, we will compare the efficiencies of these methods in solving large, real-life problems.
We also plan to analyze the relationships among contour integral-based nonlinear eigensolvers.
\section*{Acknowledgements}
The authors would like to thank Dr. Kensuke Aishima, The University of Tokyo for his valuable comments.
The authors are also grateful to an anonymous referee for useful comments.
%

\end{document}